%2multibyte Version: 5.50.0.2890 CodePage: 1253
%οΏ½οΏ½letter οΏ½οΏ½οΏ½\documentclass[letterpaper,12pt]{article}
%οΏ½οΏ½legal οΏ½οΏ½οΏ½\documentclass[legalpaper,12pt]{article} 

\documentclass[12pt]{article}%
\usepackage{graphicx}
\usepackage{color}
\usepackage{amsmath}
\usepackage{a4}
\usepackage{amsfonts}
\usepackage{amssymb}
\usepackage[all]{xy}
\setcounter{MaxMatrixCols}{30}
%TCIDATA{OutputFilter=latex2.dll}
%TCIDATA{Version=5.50.0.2890}
%TCIDATA{Codepage=1253}
%TCIDATA{CSTFile=LaTeX article (bright).cst}
%TCIDATA{Created=Thu Sep 16 15:43:27 2004}
%TCIDATA{LastRevised=Tuesday, December 02, 2014 14:44:25}
%TCIDATA{<META NAME="GraphicsSave" CONTENT="32">}
%TCIDATA{<META NAME="SaveForMode" CONTENT="1">}
%TCIDATA{BibliographyScheme=Manual}
%TCIDATA{<META NAME="DocumentShell" CONTENT="Journal Articles\Standard LaTeX Article">}
%TCIDATA{Language=American English}
%BeginMSIPreambleData
\providecommand{\U}[1]{\protect \rule{.1in}{.1in}}
%EndMSIPreambleData
\newtheorem{theorem}{Theorem}

\newtheorem{corollary}[theorem]{Corollary}

\newtheorem{example}[theorem]{Example}

\newtheorem{lemma}[theorem]{Lemma}

\newtheorem{proposition}[theorem]{Proposition}

\newenvironment{proof}[1][Proof]{\textbf{#1.} }{\  \rule{0.5em}{0.5em}}

\begin{document}

\title{On surfaces with conical singularities of arbitrary angle}
\author{Charalampos Charitos$\dagger$, Ioannis Papadoperakis$\dagger$
\and and Georgios Tsapogas$\ddagger$\\$\dagger$Agricultural University of Athens \\and $\ddagger$University of the Aegean}
\maketitle

\begin{abstract}
The geometry of closed surfaces equipped with a Euclidean metric with finitely
many conical points of arbitrary angle is studied. The main result is that the
set of closed geodesics is dense in the space of geodesics. \newline%
\textit{{2010 Mathematics Subject Classification:} 57M50, 53C22 }

\end{abstract}

\section{Introduction}

Let $S$ be a closed surface of genus $\geq1$ equipped with a Euclidean metric
with finitely many \textit{conical singularities} (or \textit{conical
points}), denoted by $s_{1},...,s_{n}.$ Every point which is not conical will
be called a \textit{regular point} of $S.$ Denote by $\theta(s_{i})$ the angle
at each $s_{i},$ with $\theta(s_{i})\in(0,+\infty)\setminus \left \{
2\pi \right \}  $ and denote by $C\left(  S\right)  $ the set $\left \{
s_{1},...,s_{n}\right \}  .$ Existence of conical points with angle less than
$2\pi$ results into major differences in the geometry of the surface $S$
compared to the case where all angles are $>2\pi.$ An\ important property
which fails in this class of spaces is that geodesic segments with specified
endpoints and homotopy class are no longer unique and similarly for geodesic
rays and lines in the universal cover $\widetilde{S}$ (see Example
\ref{paradeigma} below). Moreover, extension of geodesics also fails. More
precisely, there exist geodesic segments $\sigma$ in $\widetilde{S}$ not
containing any singularity which cannot be extended to any geodesic segment
$\sigma^{\prime}$ properly containing $\sigma.$ These facts make the study of
the geometry of $\widetilde{S}$ interesting. In fact, the tools for studying
the geometry of Euclidean surfaces with conical points of arbitrary angle are,
in principle, limited to the property that $\widetilde{S}$ is a hyperbolic
space in the sense of Gromov.

In this note we first show that the set of points on the boundary
$\partial \widetilde{S}$ to which there corresponds more than one geodesic ray
is dense in $\partial \widetilde{S}$ (see Theorems \ref{dense_notunique} and
\ref{line_dense} below). Moreover, it is shown that for any boundary point
$\xi \in \partial \widetilde{S}$ there exists a base point $\widetilde{x}_{0}%
\in \widetilde{S}$ such that at least two geodesic rays emanating from
$\widetilde{x}_{0}$ correspond to $\xi.$ We then show that the images of all
geodesic rays corresponding to a boundary point $\xi \in \partial \widetilde{S}$
are contained in a convex subset of $\widetilde{S}$ whose boundary is geodesic
consisting of two geodesic rays. Thus, in the class of geodesic rays
corresponding to each point $\xi \in \partial \widetilde{S}$ there are associated
two distinct outermost (left and right) geodesic rays. Similarly for geodesic lines.

We show that the set of closed geodesics is dense in the space of all
geodesics $GS$ in the following sense: for each pair of distinct points
$\xi,\eta \in \partial \widetilde{S}$ and each outermost geodesic line $\gamma$
joining them, there exists a sequence of geodesics $\left \{  c_{n}\right \}  $
in $\widetilde{S},$ with the projection of every $c_{n}$ to $S$ being a closed
geodesic, such that $\left \{  c_{n}\right \}  $ converges in the usual uniform
sense on compact sets to $\gamma.$

\section{Preliminaries}

Let $\widetilde{S}$ be the universal covering of $S$ and let $p:\widetilde
{S}\rightarrow S$ be the universal covering projection. Obviously, the
universal covering $\widetilde{S}$ is homeomorphic to $\mathbb{R}^{2}$ and by
requiring $p$ to be a local isometric map we may lift $d$ to a metric
$\widetilde{d}$ on $\widetilde{S}$ so that $(\widetilde{S},$ $\widetilde{d})$
becomes an $e.s.c.s.$ Clearly, $\pi_{1}\left(  S\right)  $ is a discrete group
of isometries of $\widetilde{S}$ acting freely on $\widetilde{S}$ so that
$S=\widetilde{S}/\pi_{1}\left(  S\right)  .$

Due to the existence of conical points with angle $<2\pi,$ a geodesic $\gamma$
in $S$, usually defined to be a local isometric map, may have homotopically
trivial self intersections, that is,

\begin{center}
$\exists t_{1},t_{2}\in \mathbb{R}$ with $\gamma \left(  t_{1}\right)
=\gamma \left(  t_{2}\right)  $ such that the loop $\gamma|_{\left[
t_{1},t_{2}\right]  }$ is contractible.
\end{center}

\noindent Clearly, any lift $\widetilde{\gamma}$ to the universal cover
$\widetilde{S}$ of $S$ of a local geodesic $\gamma$ with homotopically trivial
self intersections is not a global isometric map. In view of this and Lemma
\ref{positive_distance} below, we restrict our attention to geodesics and
geodesic segments which do not have homotopically trivial self intersections.

Let $GS$ be the space of all local isometric maps $\gamma:\mathbb{R}%
\rightarrow S$ so that its lift to the universal cover $\widetilde{S}$ is a
(global) geodesic. The image of such a $\gamma$ will be referred to as a
geodesic in $S.$ Similarly we define the notion of a geodesic segment, that
is, a local isometric map whose domain is a closed interval which lifts to a
geodesic segment in $\widetilde{S}.$ The geodesic flow is defined by the map

\begin{center}
$\Phi:\mathbb{R}\times GS\rightarrow GS$
\end{center}

\noindent where the action of $\mathbb{R}$ is given by right translation, i.e.
for each $t\in \mathbb{R}$ and $\gamma \in GS,$ $\Phi(t,\gamma)=t\cdot \gamma,$
where $t\cdot \gamma:\mathbb{R}\rightarrow S$ is the geodesic defined by
$t\cdot \gamma(s)=\gamma(t+s),$ $s\in \mathbb{R}.$

The group $\pi_{1}\left(  S\right)  $ with the word metric is
\textit{hyperbolic in the sense of Gromov. }On the other hand, $\pi_{1}\left(
S\right)  $ acts co-compactly on $\widetilde{S};$ this implies that
$\widetilde{S}$ is itself a hyperbolic space in the sense of Gromov (see for
example \cite[Ch.4 Th. 4.1]{[CDP]}) which is complete and locally compact.
Hence, $\widetilde{S}$ is a proper space i.e. each closed ball in
$\widetilde{S}$ is compact (see \cite{[GLP]} Th. 1.10). Therefore, the visual
boundary $\partial \widetilde{S}$ of $\widetilde{S}$ is defined by means of
geodesic rays and is homeomorphic to $\mathbb{S}^{1}$ (see \cite{[CDP]}, p.19).

The following properties contain information concerning the images of
geodesics with respect to the conical points.

\begin{lemma}
\label{positive_distance}There exists a positive real number $C$ such that for
any geodesic $\gamma$ in $\widetilde{S}$
\[
d\left(  \gamma \left(  t\right)  ,\widetilde{s_{i}}\right)  \geq
C\mathrm{\ for\ all\  \ }t\in \mathbb{R}\mathrm{\  \  \mathrm{and\  \ for\  \ all\ }%
}s_{i}\in C\left(  S\right)  \mathrm{\ with\ }\theta \left(  s_{i}\right)
\in \left(  0,2\pi \right)  .
\]
where $\widetilde{s_{i}}$ denotes a pre-image of $s_{i}.$
\end{lemma}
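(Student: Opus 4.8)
The plan is to localize the statement near each singular point and to exploit the fact that a cone point of angle $<2\pi$ is positively curved, hence repels globally minimizing geodesics; the whole proof reduces to a chord-versus-go-around comparison in a flat cone disc. First I would reduce to a local assertion. Since there are only finitely many singular points $s_{1},\dots,s_{n}$ and $S$ is compact, for each $s_{i}$ with $\theta_{i}:=\theta(s_{i})\in(0,2\pi)$ I can pick $r_{i}>0$ small enough that around any pre-image $\widetilde{s_{i}}$ the closed ball $\overline{B(\widetilde{s_{i}},r_{i})}$ contains no other singular point and is isometric to the closed disc of radius $r_{i}$ in the model Euclidean cone of total angle $\theta_{i}$. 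Because $\pi_{1}(S)$ acts by isometries, $r_{i}$ and the resulting bound are the same at every pre-image in the orbit of $s_{i}$. It then suffices to find, for each such $i$, a constant $C_{i}>0$ so that no geodesic of $\widetilde{S}$ enters $B(\widetilde{s_{i}},C_{i})$, and to set $C=\min_{i}C_{i}$.

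For the local step, fix such an $i$ and suppose a geodesic $\gamma$ of $\widetilde{S}$ meets $B(\widetilde{s_{i}},r_{i})$. Since $\widetilde{S}$ is proper and $\gamma$ is a bi-infinite global geodesic, $d(\gamma(t),\widetilde{s_{i}})\to\infty$ as $|t|\to\infty$, so this distance attains a minimum $\rho<r_{i}$ at some parameter $t_{0}$, at the point $x_{0}=\gamma(t_{0})$. Let $[a,b]\ni t_{0}$ be the maximal interval with $\gamma([a,b])\subset\overline{B(\widetilde{s_{i}},r_{i})}$, so that $P:=\gamma(a)$ and $Q:=\gamma(b)$ lie on the bounding circle of radius $r_{i}$. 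Developing the flat cone disc into the plane, the sub-arc $\gamma([a,b])$ unrolls to a straight segment from $\bar P$ to $\bar Q$, both at distance $r_{i}$ from the apex $O$, whose nearest point to $O$ is the interior point $\bar x_{0}$. Writing $\beta$ for the angle $\angle \bar P O \bar Q\in(0,\pi)$, elementary trigonometry gives $\rho=r_{i}\cos(\beta/2)$ and $\func{length}(\gamma|_{[a,b]})=2r_{i}\sin(\beta/2)$.

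Minimality then forces the lower bound. The competitor path from $P$ to $Q$ that runs the other way around the apex stays entirely inside $\overline{B(\widetilde{s_{i}},r_{i})}$ and has length at most $2r_{i}\sin((\theta_{i}-\beta)/2)$ when $\theta_{i}-\beta\le\pi$, or $2r_{i}$ (the broken path through the apex) otherwise. Since $\gamma|_{[a,b]}$ is a sub-arc of a global geodesic of $\widetilde{S}$, it is distance-minimizing between $P$ and $Q$, so its length cannot exceed that of the competitor. In the first case this yields $\sin(\beta/2)\le\sin((\theta_{i}-\beta)/2)$, hence $\beta\le\theta_{i}/2$ and therefore $\rho=r_{i}\cos(\beta/2)\ge r_{i}\cos(\theta_{i}/4)$; the second case forces $\beta<\theta_{i}-\pi$ and gives an even larger $\rho$. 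Because $\theta_{i}<2\pi$ means $\theta_{i}/4<\pi/2$, the constant $C_{i}:=r_{i}\cos(\theta_{i}/4)$ is strictly positive, which establishes the local claim and hence the lemma with $C=\min_{i}C_{i}$.

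The hard part is not the trigonometry but justifying the geometric set-up that makes it legitimate: one must ensure that the relevant piece of $\gamma$ really develops to a single straight chord whose closest point to the apex is interior, and that the competitor arc lies in the region where the metric is genuinely the model cone metric, so that the minimizing property of $\gamma$ may be applied. Choosing $r_{i}$ small enough that $\overline{B(\widetilde{s_{i}},r_{i})}$ embeds isometrically as a cone disc (no self-overlap and no second singularity inside) is exactly what secures both points, after which the comparison is routine.
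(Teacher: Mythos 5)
The paper itself contains no proof of this lemma; it simply cites \cite{[CPT]}. Your development-and-comparison argument is the natural self-contained proof, and its overall structure is sound: reduce to a single embedded model cone disc $\overline{B(\widetilde{s_i},r_i)}$ around each pre-image, use properness and the global-isometry property of $\gamma$ to locate a closest-approach sub-arc $\gamma|_{[a,b]}$ with endpoints on the bounding circle, unroll it to a chord, and play its length off against the path around the other side of the apex. The identities $\rho=r_i\cos(\beta/2)$ and $\operatorname{length}=2r_i\sin(\beta/2)$, the deduction $\beta\le\theta_i/2$, and the conclusion $\rho\ge r_i\cos(\theta_i/4)>0$ (using $\theta_i<2\pi$) are all correct as far as they go, as is the transfer of the constant across the deck-group orbit.

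There is one step whose justification is misplaced and which, as written, can fail. Your case analysis tacitly assumes $\beta\le\theta_i$, i.e.\ that the developed chord does not wind around the apex; if $\beta>\theta_i$, then ``the other way around the apex'' subtends the negative angle $\theta_i-\beta$, the claimed competitor of length $2r_i\sin\bigl((\theta_i-\beta)/2\bigr)$ does not exist, and your ``first case'' $\theta_i-\beta\le\pi$ would apply to a meaningless comparison. You attribute the fact that the arc develops to a single non-overlapping chord to choosing $r_i$ small, but winding is scale-invariant on a cone: when $\theta_i<\pi$, a straight chord passing close enough to the apex sweeps a total angle $\beta$ arbitrarily close to $\pi>\theta_i$ no matter how small the disc is. The repair uses minimality again rather than the size of $r_i$: if $\beta\ge\theta_i$ the arc crosses some ray from the apex twice, at radii $d_1,d_2$, and the length of the sub-arc between these two visits is at least $\sqrt{d_1^2+d_2^2-2d_1d_2\cos\theta_i}>|d_1-d_2|$, the distance between the two points along their common ray --- contradicting the fact that every sub-arc of a global geodesic is minimizing. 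With $\beta<\theta_i$ secured, your two cases are exhaustive and the computation goes through. For completeness you should also record that $\gamma$ cannot pass through $\widetilde{s_i}$ itself (at a cone point of angle $<2\pi$ one of the two angles formed by an incoming and outgoing branch is $<\pi$, so the path can be locally shortened); this is what guarantees $\rho>0$ and that the development along the arc is defined at all.
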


The proof of this Lemma is given in \cite{[CPT]}. By considering, if
necessary, a constant $C^{\prime}$ smaller than $C,$ the above Lemma holds for
geodesics in $S.$

\begin{lemma}
\label{dyoBig}If two geodesic segments $\sigma_{1},\sigma_{2}$ in
$\widetilde{S}$ intersect at two points $x,y$ such that $x,y$ are isolated in
$\sigma_{1}\cap \sigma_{2}$ then both $x,y$ are conical points with angle
$>2\pi.$ If $\sigma_{1}\cap \sigma_{2}$ is a closed segment then its endpoints
are conical points with angle $>2\pi.$\newline Similarly for homotopic with
endpoints fixed geodesic segments in $S.$
\end{lemma}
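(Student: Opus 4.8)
The plan is to turn the two intersection points into the vertices of a geodesic bigon and to analyse that bigon with the Gauss--Bonnet theorem together with the local angle condition that characterises geodesics through conical points. Since $\widetilde{S}$ is simply connected, the two sub-arcs $\alpha\subset\sigma_1$ and $\beta\subset\sigma_2$ joining $x$ to $y$ bound an embedded disk $D$. Because $x,y$ are isolated points of $\sigma_1\cap\sigma_2$, the two arcs leave each vertex in distinct directions, so the boundary of $D$ has genuine corners at $x$ and $y$ with interior angles $\omega_x,\omega_y>0$. First I would dispose of the possibility that a vertex is a conical point of angle $<2\pi$: by Lemma \ref{positive_distance} no geodesic meets such a point, and $x,y$ lie on both $\sigma_1$ and $\sigma_2$, so each vertex is either a regular point or a conical point of angle $>2\pi$. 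The whole content of the statement is therefore to exclude regular vertices.

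Next I would record the sign of every term in Gauss--Bonnet for $D$. The interior of $D$ contributes $\sum_{p}(2\pi-\theta(p))$, a sum over the conical points lying in $\mathrm{int}\,D$. Along the two edges the geodesic curvature vanishes at regular points; at a conical point $p$ through which an edge passes the geodesic condition forces the angle on each side to be at least $\pi$, so the angle $\phi_p$ on the side of $D$ satisfies $\phi_p\ge\pi$ and the corresponding exterior angle $\pi-\phi_p$ is non-positive (and by Lemma \ref{positive_distance} such a $p$ has angle $>2\pi$). Hence the only possibly positive boundary contributions are the exterior angles $\pi-\omega_x$ and $\pi-\omega_y$ at the vertices, and Gauss--Bonnet reads
\[
\sum_{p\in\mathrm{int}\,D}(2\pi-\theta(p))+(\pi-\omega_x)+(\pi-\omega_y)+\sum_{\text{edge }p}(\pi-\phi_p)=2\pi .
\]

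The decisive step is the local analysis at a vertex, which is where I expect the real difficulty to lie. At a crossing the four angular sectors determined by the four half-geodesics at $x$ satisfy, by the defining inequality for a geodesic through a conical point, that each of the two opposite pairs of adjacent sectors has sum at least $\pi$; adding the four inequalities gives $\theta(x)\ge 2\pi$, so a vertex can never have deficit, and is regular exactly when all four inequalities are equalities, i.e. when the two geodesics cross as two honest straight lines. To upgrade this to the strict inequality I would use that $\alpha$ and $\beta$, being sub-arcs of global geodesics of $\widetilde{S}$, are both length-minimising between $x$ and $y$ and hence have equal length. If $x$ were regular the crossing would be flat, and a first-variation, corner-cutting argument across the angle $\omega_x<\pi$ produces a path from a point of $\alpha$ to a point of $\beta$ strictly shortening one of the two minimising arcs, contradicting minimality; combined with the displayed identity this rules out $\theta(x)=2\pi$ and forces $\theta(x)>2\pi$, and symmetrically for $y$. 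Reconciling the corner-cutting estimate with the positive curvature that the identity pushes into $\mathrm{int}\,D$ is the main obstacle, and is where the hypothesis that $\widetilde{S}$ is hyperbolic in the sense of Gromov, so that the two minimising arcs are uniformly close, is expected to enter.

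Finally, the case in which $\sigma_1\cap\sigma_2$ is a closed segment reduces to the previous one by taking $x,y$ to be the two endpoints of that segment: beyond these endpoints the two geodesics separate, so each endpoint is again a transversal corner to which the vertex analysis applies. The statement for geodesic segments of $S$ that are homotopic with endpoints fixed follows by lifting them to $\widetilde{S}$ with common endpoints $\widetilde{x},\widetilde{y}$ and applying the result just proved, since a conical point of $S$ of angle $>2\pi$ is precisely the projection of such a point of $\widetilde{S}$.
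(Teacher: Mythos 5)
There is a genuine gap, and it sits exactly where you flagged it: the passage from $\theta(x)\geq 2\pi$ to $\theta(x)>2\pi$. Your local four-sector analysis uses only that $\sigma_1$ and $\sigma_2$ are each individually geodesic through the vertex, and that information is consistent with a regular point (two straight lines crossing in the plane satisfy all four inequalities with equality), so no purely local argument at the vertex can finish. The Gauss--Bonnet identity does not rescue this: conical points of angle $<2\pi$ lying in $\mathrm{int}\,D$ contribute \emph{positively} to $\sum_{p}(2\pi-\theta(p))$ (Lemma \ref{positive_distance} keeps them off the geodesics, not out of the disk they bound), so the identity places no upper bound on $\omega_x+\omega_y$ and cannot exclude regular vertices. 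Finally, the corner-cutting you propose is applied at the wrong corner: a shortcut from $\alpha(\varepsilon)$ to $\beta(\varepsilon)$ across the bigon angle $\omega_x<\pi$ joins two points that are each at distance $L-\varepsilon$ from $y$ along their own arcs, and the resulting path from $\alpha(\varepsilon)$ to $y$ via $\beta(\varepsilon)$ has length about $c\varepsilon+(L-\varepsilon)>L-\varepsilon$; it shortens the closed bigon, which is not a geodesic, and contradicts nothing.

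The missing idea --- and the entirety of the paper's proof --- is to cut the corner of a \emph{concatenated} path that is genuinely distance-realizing, which forces you to use the portions of $\sigma_1,\sigma_2$ outside the bigon (your construction discards these after noting the corners are genuine). Writing $\sigma_2=[w_2,z_2]$ with $x$ between $w_2$ and $y$, the path $\sigma_1|_{[x,y]}\cup\sigma_2|_{[y,z_2]}$ has length $d(x,y)+d(y,z_2)=d(x,z_2)$, since geodesic segments in $\widetilde S$ are distance-realizing; hence it is itself a geodesic and makes angle at least $\pi$ on \emph{both} sides at $y$. The same applies to $\sigma_2|_{[x,y]}\cup\sigma_1|_{[y,z_1]}$. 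In the cyclic order of the four directions at $y$ these two conditions single out two \emph{disjoint} sectors each of angle at least $\pi$, and the two remaining sectors are nonempty because $y$ is isolated in $\sigma_1\cap\sigma_2$, whence $\theta(y)>\pi+\pi$; symmetrically for $x$. (Equivalently: if $y$ were regular, the angle of the concatenation at $y$ on the side of the bigon would be $\pi$ minus a positive sector, and rounding that corner would strictly shorten a path realizing $d(x,z_2)$.) This one-line concatenation trick replaces both your Gauss--Bonnet bookkeeping and the hyperbolicity appeal, neither of which is needed. Note also that the argument, and the lemma itself, tacitly require the segments to extend past the intersection points; your reduction of the closed-segment case and the lift argument for $S$ are fine.
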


\begin{proof}
Let $\sigma_{1}=\left[  w_{1},z_{1}\right]  ,$ $\sigma_{2}=\left[  w_{2}%
,z_{2}\right]  $ be two geodesic segments in $\widetilde{S}$ intersecting at
two points $x,y$ which are isolated in $\sigma_{1}\cap \sigma_{2}.$ Clearly,
$\sigma_{1}|_{\left[  x,y\right]  }\cup \sigma_{2}|_{\left[  y,z_{2}\right]  }$
realizes the distance from $x$ to $z_{2}.$ Therefore, the angle formed by
$\sigma_{1}|_{\left[  x,y\right]  }$, $\sigma_{2}|_{\left[  y,z_{2}\right]  }$
at $y$ is at least $\pi.$ Similarly, the angle formed by $\sigma_{2}|_{\left[
x,y\right]  }$, $\sigma_{1}|_{\left[  y,z_{1}\right]  }$ at $y$ is at least
$\pi,$ hence $\theta \left(  y\right)  >2\pi.$
\end{proof}

Since $\widetilde{S}$ is a hyperbolic space in the sense of Gromov, the
isometries of $\widetilde{S}$ are classified as elliptic, parabolic and
hyperbolic \cite{Gr}. On the other hand, $\pi_{1}\left(  S\right)  $ is a
hyperbolic group, thus $\pi_{1}\left(  S\right)  $ does not contain parabolic
elements with respect to its action on its Cayley graph (see Th. 3.4 in
\cite{[CDP]}). From this, it follows that all elements of $\pi_{1}\left(
S\right)  $ are hyperbolic isometries of $\widetilde{S}.$ Therefore, for each
$\varphi \in \pi_{1}\left(  S\right)  $ and each $x\in \widetilde{S}$ the
sequence $\varphi^{n}(x)$ (resp. $\varphi^{-n}(x))$ has a limit point
$\varphi(+\infty)$ (resp. $\varphi(-\infty))$ when $n\rightarrow+\infty$ and
$\varphi(+\infty)\neq \varphi(-\infty).$ The point $\varphi(+\infty)$ is called
\textit{attractive} and the point $\varphi(-\infty)$ \textit{repulsive} point
of $\varphi.$

The following important property for hyperbolic spaces (see Proposition 2.1 in
\cite{[CDP]}) holds for $\partial \widetilde{S}.$

\begin{proposition}
\label{existence rays, lnes}For every pair of points $x\in$ $\widetilde{S},$
$\xi \in \partial \widetilde{S}$ (resp. $\eta,\xi \in \partial \widetilde{S})$ there
is a geodesic ray $r:[0,\infty)\rightarrow \widetilde{S}\cup \partial
\widetilde{S}$ (resp. a geodesic line $\gamma:(-\infty,\infty)\rightarrow
\widetilde{S}\cup \partial \widetilde{S})$ such that, $r(0)=x,$ $r(\infty)=\xi$
(resp. $\gamma(-\infty)=\eta,$ $\gamma(\infty)=\xi).$
\end{proposition}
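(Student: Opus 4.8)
The plan is to realize the desired ray (resp. line) as a limit of geodesic segments, exploiting the two features of $\widetilde{S}$ established above: it is a \emph{proper} geodesic space (closed balls are compact) and it is $\delta$-hyperbolic in the sense of Gromov. Properness makes the Arzel\`a--Ascoli theorem available for families of $1$-Lipschitz maps, and hyperbolicity (thin triangles / finiteness of the Gromov product) is what lets us identify the boundary points of the limiting geodesic.

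For the ray, I would fix $x\in\widetilde{S}$ and, using that $\partial\widetilde{S}$ is defined by means of geodesic rays, choose a ray $r_{0}:[0,\infty)\rightarrow\widetilde{S}$ representing $\xi$. Put $y_{n}=r_{0}(n)$, so $\widetilde{d}(x,y_{n})=\ell_{n}\rightarrow\infty$, and for each $n$ pick (the space being geodesic) an arc-length geodesic segment $\sigma_{n}:[0,\ell_{n}]\rightarrow\widetilde{S}$ from $x$ to $y_{n}$. Each $\sigma_{n}$ is $1$-Lipschitz with $\sigma_{n}(0)=x$, so on every interval $[0,T]$ its image lies in the compact ball $\overline{B}(x,T)$. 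A diagonal argument over $T=1,2,\dots$ combined with Arzel\`a--Ascoli extracts a subsequence converging uniformly on compact sets to a $1$-Lipschitz map $r:[0,\infty)\rightarrow\widetilde{S}$ with $r(0)=x$; passing to the limit in $\widetilde{d}(\sigma_{n}(s),\sigma_{n}(t))=|s-t|$ shows $r$ is itself a geodesic ray. The delicate point here is to verify $r(\infty)=\xi$: in a $\delta$-hyperbolic space the segments $[x,y_{n}]$ stay within Hausdorff distance bounded independently of $n$ of the ray $r_{0}$ on their common initial portion, hence $r$ remains at finite Hausdorff distance from $r_{0}$, and two rays at finite Hausdorff distance determine the same boundary point.

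For the line, the same scheme applies but with one genuine obstacle. Choosing sequences $a_{m}\rightarrow\eta$ and $b_{n}\rightarrow\xi$ along representative rays and geodesic segments $\sigma_{m,n}$ from $a_{m}$ to $b_{n}$, the family is no longer anchored at a fixed point, so a priori the segments could escape every compact set and Arzel\`a--Ascoli would not apply. This anchoring is the main difficulty, and I would resolve it using hyperbolicity together with $\eta\neq\xi$: fixing a basepoint $o$, the Gromov product $(\eta\mid\xi)_{o}$ is finite, and thinness of triangles then forces every $\sigma_{m,n}$, for $m,n$ large, to meet a fixed ball $\overline{B}(o,R)$ with $R$ independent of $m,n$.

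I would then reparametrize each $\sigma_{m,n}$ by arc length so that time $0$ is a point of $\sigma_{m,n}$ lying in $\overline{B}(o,R)$; the reparametrized maps are defined on intervals $[-L_{m,n}^{-},L_{m,n}^{+}]$ with $L_{m,n}^{\pm}\rightarrow\infty$, are $1$-Lipschitz, and all pass through the compact set $\overline{B}(o,R)$ at time $0$. A further diagonal/Arzel\`a--Ascoli argument over $[-T,T]$ produces a subsequence converging uniformly on compact sets to a geodesic line $\gamma:(-\infty,\infty)\rightarrow\widetilde{S}$, and the stability estimate used in the ray case (now applied to both ends) gives $\gamma(+\infty)=\xi$ and $\gamma(-\infty)=\eta$. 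In summary, the only nonroutine ingredient is the hyperbolic anchoring step for lines; everything else is a standard compactness-plus-limit argument, and indeed the statement is exactly Proposition~2.1 of \cite{[CDP]}, to which one may simply appeal.
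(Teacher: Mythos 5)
Your argument is correct, but note that the paper does not prove this proposition at all: it is quoted as a known property of proper Gromov-hyperbolic spaces with a citation to Proposition 2.1 of \cite{[CDP]}, exactly as you observe in your closing sentence. What you have written is the standard proof of that cited result: properness of $\widetilde{S}$ (closed balls compact) plus Arzel\`a--Ascoli applied to the $1$-Lipschitz segments $\sigma_{n}=[x,y_{n}]$ yields a limiting geodesic ray, and $\delta$-thinness of the triangle with vertices $x$, $r_{0}(0)$, $y_{n}$ keeps each $\sigma_{n}$ within Hausdorff distance $\delta+d(x,r_{0}(0))$ of $\operatorname{Im}r_{0}$, so the limit ray is asymptotic to $r_{0}$ and represents $\xi$. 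Your treatment of the line case correctly isolates the one nontrivial point, namely anchoring the segments $[a_{m},b_{n}]$: since $\eta\neq\xi$ the Gromov product $(\eta\mid\xi)_{o}$ is finite, $\limsup_{m,n}(a_{m}\mid b_{n})_{o}$ is bounded, and the standard estimate $d(o,[a_{m},b_{n}])\leq(a_{m}\mid b_{n})_{o}+2\delta$ forces all segments through a fixed compact ball, after which the same compactness argument applies. The only places where a referee might ask for more detail are (i) the identification of the endpoints of the limit \emph{line}, which requires applying the thin-triangle comparison separately to each half of $[a_{m},b_{n}]$ on either side of its closest-point projection to $o$, and (ii) the observation that $L_{m,n}^{\pm}\rightarrow\infty$ because $d(o,a_{m}),d(o,b_{n})\rightarrow\infty$ while the time-$0$ points stay in $\overline{B}(o,R)$; both are routine. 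In short: the proposal is a sound, self-contained proof of a statement the paper merely imports by reference.
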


Remark that uniqueness does not hold in the above proposition. In fact, we
have the following straightforward corollary to Lemma \ref{dyoBig}.

\begin{corollary}
\label{Cor2Big}If a geodesic segment intersects a geodesic ray at two 
isolated points as in Lemma \ref{dyoBig},
then there exist two distinct geodesic rays defining the same point at
infinity.\newline Similarly for geodesic lines.
\end{corollary}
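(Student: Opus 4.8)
The plan is to recycle the distance-realizing concatenation that is implicit in the proof of Lemma \ref{dyoBig}. Write $\sigma$ for the geodesic segment and $r$ for the geodesic ray, and let $x,y$ be their two isolated intersection points, which by Lemma \ref{dyoBig} are conical points of angle $>2\pi$. I would parametrize $r:[0,\infty)\to\widetilde{S}$ so that $x=r(t_{1})$, $y=r(t_{2})$ with $t_{1}<t_{2}$, and set $\xi=r(\infty)$. The goal is to exhibit two distinct geodesic rays issuing from $x$ and converging to $\xi$. The first is the obvious sub-ray $r_{1}=r|_{[t_{1},\infty)}$. The second, $r_{2}$, is obtained by swapping the initial stretch $r|_{[t_{1},t_{2}]}$ for the competing sub-segment $\sigma|_{[x,y]}$, i.e.\ $r_{2}=\sigma|_{[x,y]}\cup r|_{[t_{2},\infty)}$.

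The key step, and the one that most deserves care, is to check that $r_{2}$ is a genuine (global) geodesic ray rather than merely a local geodesic. For this I would repeat the length bookkeeping of Lemma \ref{dyoBig}. Since $\sigma$ is a geodesic, its sub-arc realizes the distance, so $\mathrm{length}(\sigma|_{[x,y]})=d(x,y)$; and since $x,y,r(T)$ occur in this order along the geodesic $r$, we have $d(x,r(T))=d(x,y)+d(y,r(T))$ for every $T>t_{2}$. Hence the length of $r_{2}$ from $x$ to $r(T)$ equals $d(x,y)+(T-t_{2})=d(x,r(T))$, so $r_{2}$ realizes the distance from $x$ to $r(T)$. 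As this holds for all $T$, every compact piece $r_{2}|_{[x,r(T)]}$ is a geodesic segment, and letting $T\to\infty$ shows that $r_{2}$ is a geodesic ray with $r_{2}(\infty)=r(\infty)=\xi$. (Equivalently, one could invoke the angle estimate of Lemma \ref{dyoBig} to see that $r_{2}$ is locally geodesic at the cone point $y$, but the length argument delivers the global statement in one stroke.)

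It then remains only to see that $r_{1}\neq r_{2}$. They agree along $r|_{[t_{2},\infty)}$ but differ on the stretch between $x$ and $y$: were $\sigma|_{[x,y]}$ and $r|_{[t_{1},t_{2}]}$ to coincide, $\sigma$ and $r$ would overlap along a whole segment, contradicting the hypothesis that $x,y$ are \emph{isolated} in $\sigma\cap r$. Thus $r_{1}$ and $r_{2}$ are two distinct geodesic rays defining the same boundary point $\xi$. For the assertion about geodesic lines one argues identically: if a geodesic segment meets a geodesic line at two isolated points, I would replace the sub-arc of the line between them by the competing sub-segment, leaving both ends of the line untouched; the same length computation shows the modified curve is again a global geodesic line with the same pair of endpoints at infinity, yet distinct from the original. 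I do not expect a serious obstacle, since the distance-realizing property is essentially the content of Lemma \ref{dyoBig}; the only genuine point is the uniform-in-$T$ passage from the compact segments to the full ray, which the explicit length identity settles.
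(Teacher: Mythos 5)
Your proof is correct and follows exactly the route the paper intends: the distance-realizing concatenation $\sigma|_{[x,y]}\cup r|_{[t_2,\infty)}$ is precisely the construction implicit in the proof of Lemma \ref{dyoBig} (where the union of sub-arcs is observed to realize the distance from $x$ to the far endpoint), and your length identity plus the isolatedness argument for distinctness fills in the details the paper leaves as ``straightforward.'' Nothing to correct.
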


Thus, for each pair of points $x\in$ $\widetilde{S},$ $\xi \in \partial
\widetilde{S}$ there corresponds a class of geodesic rays $r$ with $r(0)=x,$
$r(\infty)=\xi,$ the cardinality of which varies from a singleton to
uncountable (see discussion following Example \ref{paradeigma} below). It is
well known that in hyperbolic metric spaces the stability property of
quasi-geodesic rays (and lines) holds in the sense of bounded Hausdorff
distance (see, for example, \cite[Chapter I, \S \ 6]{[Coo]}). It follows that
any two geodesic rays $r_{1},r_{2}$ in the same class are asymptotic, that is,
(see \cite[Ch. 3, Thm. 3.1]{[CDP]}) there exists a constant $A>0$ which
depends only on the hyperbolicity constant of $\widetilde{S}$ such that
\begin{equation}
\forall t\in \left[  0,+\infty \right)  ,d\left(  r_{1}\left(  t\right)
,r_{2}\left(  t\right)  \right)  <A. \label{asymptoric_constant}%
\end{equation}
Similarly for geodesic lines. For a point $\xi \in \partial \widetilde{S}$ (and
having fixed a base point in $\widetilde{S})$ we write $r\in \xi$ to indicate
that the geodesic ray $r$ belongs to the class of rays corresponding to $\xi,$
that is, $r(\infty)=\xi.$ We also say that $\xi$ is the positive point of $r.$

Similarly, for a pair $\left(  \eta,\xi \right)  $ of points in $\partial
\widetilde{S}$ with $\eta \neq \xi$ we write $\gamma \in \left(  \eta,\xi \right)
$ to indicate that the geodesic line $\gamma$ belongs to the class of lines
with the property $\gamma(-\infty)=\eta$ and $\gamma(\infty)=\xi.$ We say that
$\xi$ is the positive point of $\gamma$ and $\eta$ the negative.

By writing that the sequence $\left \{  \xi_{n}\right \}  \subset \partial
\widetilde{S}$ converges to $\xi$ in the visual metric, notation $\xi
_{n}\rightarrow \xi,$ we mean that there exist geodesic rays $r_{n}\in \xi_{n}$
and $r\in \xi$ such that the sequence $\left \{  r_{n}\right \}  $ converges in
the usual uniform sense on compact sets to $r.$

The following example demonstrates a simple case where lifts of distinct
closed geodesic (as well as non-closed geodesics) have the same negative and
positive points in $\partial \widetilde{S}.$

\begin{figure}[ptb]
\begin{center}
\includegraphics[scale=0.8]
{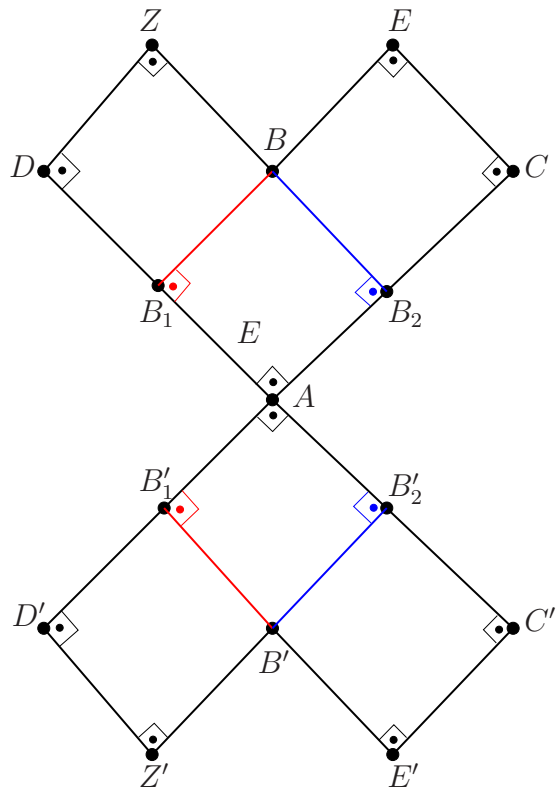}
\end{center}
\par
\begin{picture}(22,12)
\put(211,162){$A$}
\put(247,194){$B_2$}
\put(247,128){$B_2^{\prime}$}
\put(247,18){$E^{\prime}$}
\put(247,304){$E$}
%%%%%%%%%%%%%%%%%%%%%%%%%
\put(153,194){$B_1$}
\put(153,130){$B_1^{\prime}$}
\put(153,18){$Z^{\prime}$}
\put(153,304){$Z$}
\put(104,249){$D$}
\put(105,79){$D^{\prime}$}
\put(299,249){$C$}
\put(299,77){$C^{\prime}$}
%%%%%%%%%%%%%%%%%%%%%%%%%
\put(200,259){$B$}
\put(198,61){$B'$}
\put(190,186){$E$}
\end{picture}
%quadrangle
\caption{The surface $\Sigma$ with two conical points of angle $\pi$ and
$3\pi.$}%
\label{example}%
\end{figure}

\begin{example}
\label{paradeigma}Consider the genus $0$ surface $\Sigma$ obtained from the
flat figures $ACEBZDA$ and $AC^{\prime}E^{\prime}B^{\prime}Z^{\prime}%
D^{\prime}A$ by identifying $AB_{2}C$ with $AB_{2}^{\prime}C^{\prime},$
$AB_{1}D$ with $AB_{1}^{\prime}D^{\prime}$ and $EBZ$ with $E^{\prime}%
B^{\prime}Z^{\prime}$ (see Figure \ref{example}). The resulting cylinder
$\Sigma$ has two singular points $A,B$ with angles $\theta \left(  A\right)
=\pi$ and $\theta \left(  B\right)  =3\pi.$ The segments $BB_{1}$ and
$B_{1}^{\prime}B^{\prime}$ give rise to a simple closed geodesic $\sigma$ in
$\Sigma.$ Similarly, the segments $BB_{2}$ and $B_{2}^{\prime}B^{\prime}$ give
rise to a simple closed geodesic $\tau$ in $\Sigma.$ Both $\sigma$ and $\tau$
contain $B$ and their union bounds a convex subset of $\Sigma$ with the same
homotopy type as $\Sigma.$
\end{example}

Since $\Sigma$ has geodesic boundaries, the described example can clearly
occur in surfaces of any genus. Pick a lift $\widetilde{\sigma}$ of $\sigma$
in $\widetilde{\Sigma}.$ Then there is a countable number of points
$\widetilde{B_{i}},$ $i\in \mathbb{Z},$ with the properties $\widetilde{B_{i}%
}\in \operatorname{Im}\widetilde{\sigma}$ and $p\left(  \widetilde{B_{i}%
}\right)  =B.$ Clearly, any lift $\widetilde{\tau}$ of $\tau$ containing
$\widetilde{B_{i_{0}}}$ for some $i_{0}$ must contain $\widetilde{B_{i}}$ for
all $i$ and, moreover, $\widetilde{\tau}\left(  +\infty \right)  =\widetilde
{\sigma}\left(  +\infty \right)  $ and $\widetilde{\tau}\left(  -\infty \right)
=\widetilde{\sigma}\left(  -\infty \right)  .$ Therefore, using $\sigma$ and
$\tau$ we may construct countably many pairwise distinct closed geodesics in
$\Sigma,$ as well as uncountably many non-closed geodesics, whose lifts in
$\widetilde{\Sigma}$ are contained in $\operatorname{Im}$ $\widetilde{\tau
}\cup \operatorname{Im}\widetilde{\sigma}$ and they all share the same positive
(resp. negative) point $\widetilde{\tau}\left(  +\infty \right)  =\widetilde
{\sigma}\left(  +\infty \right)  $ (resp. $\widetilde{\tau}\left(
-\infty \right)  =\widetilde{\sigma}\left(  -\infty \right)  $ )$.$

The \textit{limit set} $\Lambda$ of $\pi_{1}\left(  S\right)  $ is defined to
be $\Lambda=\overline{\pi_{1}\left(  S\right)  x}\cap \partial \widetilde{S},$
where $x$ is an arbitrary point in $\widetilde{S}.$ Since the action of
$\pi_{1}\left(  S\right)  $ on $\widetilde{S}$ is co-compact, it is a well
known fact that $\Lambda=\partial \widetilde{S},$ and hence $\Lambda
=\mathbb{S}^{1}.$ Note that the action of $\pi_{1}\left(  S\right)  $ on
$\widetilde{S}$ can be extended to $\partial \widetilde{S}$ and that the action
of $\pi_{1}\left(  S\right)  $ on $\partial \widetilde{S}\times \partial
\widetilde{S}$ is given by the product action.

Denote by $F_{h}$ the set of points in $\partial \widetilde{S}$ which are fixed
by hyperbolic elements of $\pi_{1}\left(  S\right)  .$ Since $\Lambda
=\partial \widetilde{S},$ the following three results can be derived from
\cite{[Coo]}.

\begin{proposition}
\label{dense1} The set $F_{h}$ is $\pi_{1}\left(  S\right)  -$invariant and
dense in $\partial \widetilde{S}.$
\end{proposition}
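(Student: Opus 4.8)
The plan is to separate the two assertions: $\pi_{1}(S)$-invariance is a one-line conjugation argument, while density rests on the convergence dynamics of the action of $\pi_{1}(S)$ on $\partial\widetilde{S}$, together with the fact that $\Lambda=\partial\widetilde{S}$.

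For invariance, suppose $\xi\in F_{h}$, so that $\varphi(\xi)=\xi$ for some hyperbolic $\varphi\in\pi_{1}(S)$, and let $\psi\in\pi_{1}(S)$ be arbitrary. Then $\psi\varphi\psi^{-1}$ fixes $\psi(\xi)$, and a conjugate of a hyperbolic isometry is again hyperbolic, its attractive and repulsive points being $\psi(\varphi(+\infty))$ and $\psi(\varphi(-\infty))$. Hence $\psi(\xi)\in F_{h}$, so $F_{h}$ is $\pi_{1}(S)$-invariant. In particular, fixing one attractive point $a_{0}=\varphi(+\infty)$, the whole orbit $\pi_{1}(S)\cdot a_{0}$ lies in $F_{h}$, so for density it suffices to show that a single such orbit is dense in $\partial\widetilde{S}$.

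For density, fix a base point $x_{0}\in\widetilde{S}$ and an arbitrary $\xi\in\partial\widetilde{S}$. Since $\Lambda=\partial\widetilde{S}$, there is a sequence $g_{n}\in\pi_{1}(S)$ with $g_{n}(x_{0})\to\xi$, and after passing to a subsequence I may assume $g_{n}^{-1}(x_{0})\to\eta$ for some $\eta\in\partial\widetilde{S}$, the boundary being compact. If $\eta=\xi$ I break the tie: because $\Lambda\cong\mathbb{S}^{1}$ is infinite the group is non-elementary, so the stabilizer of $\xi$ is a proper subgroup and there is $h\in\pi_{1}(S)$ with $h^{-1}(\xi)\neq\xi$. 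Replacing $g_{n}$ by $g_{n}h$ keeps $g_{n}h(x_{0})$ within the fixed distance $d(h(x_{0}),x_{0})$ of $g_{n}(x_{0})$, so $g_{n}h(x_{0})\to\xi$ still holds (a bounded perturbation of a sequence converging to a boundary point of a proper hyperbolic space converges to the same point), while $(g_{n}h)^{-1}(x_{0})=h^{-1}g_{n}^{-1}(x_{0})\to h^{-1}(\xi)\neq\xi$ by continuity of the boundary extension of $h^{-1}$. Thus I may assume $\eta\neq\xi$ from the start.

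The core of the argument is the standard North--South dynamics in a proper hyperbolic space: if $g_{n}(x_{0})\to\xi$ and $g_{n}^{-1}(x_{0})\to\eta$ with $\eta\neq\xi$, then for any disjoint closed arcs $U\ni\xi$ and $V\ni\eta$ one has $g_{n}(\partial\widetilde{S}\setminus V)\subseteq U$ for all large $n$; this is precisely where the thin-triangle and Gromov-product estimates for $\widetilde{S}$ enter, and it can be extracted from \cite{[Coo]} (compare also \cite[Ch.\ 3]{[CDP]}). Each $g_{n}$ is a hyperbolic isometry, hence has an attractive fixed point $g_{n}(+\infty)\in F_{h}$. The inclusion $g_{n}(\overline{U})\subseteq U$ forces, by an elementary fixed-point argument on the arc $\overline{U}$, a fixed point of $g_{n}$ in $\overline{U}$; since $g_{n}^{-1}$ attracts towards $\eta$, the repulsive point $g_{n}(-\infty)$ lies in $V$ for large $n$, so the fixed point found in $\overline{U}$ must be the attractive one, i.e.\ $g_{n}(+\infty)\in\overline{U}$. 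Letting $U$ shrink to $\xi$ gives $g_{n}(+\infty)\to\xi$, and as $g_{n}(+\infty)\in F_{h}$ this shows $\xi\in\overline{F_{h}}$; since $\xi$ was arbitrary, $F_{h}$ is dense. I expect the main obstacle to be the North--South dynamics step, both in formulating the convergence of $g_{n}$ on the boundary from the displacement data $g_{n}(x_{0})\to\xi$, $g_{n}^{-1}(x_{0})\to\eta$, and in verifying that the fixed point trapped in $\overline{U}$ is the attractive rather than the repulsive one.
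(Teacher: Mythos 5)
Your proof is essentially correct, but note that the paper does not prove this proposition at all: Propositions \ref{dense1}--\ref{dense3} are stated with the single remark that, since $\Lambda=\partial\widetilde{S}$, they ``can be derived from \cite{[Coo]}.'' What you have written is, in effect, a reconstruction of the standard argument from Coornaert's work, so there is nothing to contrast with in the paper itself; the value of your version is that it is self-contained modulo two standard facts (the convergence-group property of the boundary action, and the fact that all nontrivial elements of $\pi_{1}(S)$ are hyperbolic, which the paper does establish). The invariance part and the reduction to $\eta\neq\xi$ via the bounded perturbation $g_{n}\mapsto g_{n}h$ are both fine; for the latter you implicitly use that a non-elementary hyperbolic group cannot fix a boundary point, which is standard but worth a sentence.

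The one step you should tighten is the placement of the two fixed points of $g_{n}$. Rather than arguing via a topological fixed point trapped in $\overline{U}$ and then ruling out that it is the repulsive one, observe that $g_{n}(\partial\widetilde{S}\setminus V)\subseteq U$ is equivalent to $g_{n}^{-1}(\partial\widetilde{S}\setminus U)\subseteq V$; picking any $p\in\partial\widetilde{S}\setminus(U\cup V)$ (nonempty once $U,V$ are small arcs) one gets $g_{n}^{k}(p)\in U$ for all $k\geq 1$ and $g_{n}^{-k}(p)\in V$ for all $k\geq 1$, hence $g_{n}(+\infty)=\lim_{k}g_{n}^{k}(p)\in\overline{U}$ and $g_{n}(-\infty)\in\overline{V}$ directly, with no case analysis about which fixed point was caught. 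Also make sure to discard the finitely many $n$ for which $g_{n}$ could be trivial (automatic, since $d(x_{0},g_{n}(x_{0}))\to\infty$). With these small repairs the argument is complete.
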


\begin{proposition}
\label{dense2} There exists an orbit of $\pi_{1}\left(  S\right)  $ dense in
$\partial \widetilde{S}\times \partial \widetilde{S}.$
\end{proposition}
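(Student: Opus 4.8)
The plan is to deduce the existence of a dense orbit from a \emph{topological transitivity} property of the product action, via the Baire category theorem. First I would note that $\partial\widetilde{S}\times\partial\widetilde{S}$ is compact and metrizable (it is homeomorphic to the torus $\mathbb{S}^{1}\times\mathbb{S}^{1}$), hence a second countable Baire space. Fixing a countable basis $\{O_{k}\}$ and setting $W_{k}=\bigcup_{\gamma\in\pi_{1}(S)}\gamma O_{k}$, each $W_{k}$ is open and $\pi_{1}(S)$-invariant, and a point whose orbit is dense is precisely a point of $\bigcap_{k}W_{k}$. Since $\bigcap_{k}W_{k}$ is a dense $G_{\delta}$ (hence nonempty) as soon as every $W_{k}$ is dense, it suffices to prove the transitivity statement: for all nonempty open boxes $U_{1}\times U_{2}$ and $V_{1}\times V_{2}$ there is $\gamma\in\pi_{1}(S)$ with $\gamma(U_{1})\cap V_{1}\neq\emptyset$ and $\gamma(U_{2})\cap V_{2}\neq\emptyset$ simultaneously.

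Next I would reduce to a clean configuration. Since hitting a smaller target box implies hitting a larger one, and I am equally free to shrink the source box, after replacing the four sets by small subarcs I may assume $U_{1},U_{2},V_{1},V_{2}$ are \emph{pairwise disjoint} arcs. The engine is the north--south dynamics of hyperbolic isometries recorded in Section 2: for a hyperbolic $\varphi\in\pi_{1}(S)$ with repelling and attractive points $\varphi(-\infty),\varphi(+\infty)$ and disjoint neighborhoods $O^{-}\ni\varphi(-\infty)$, $O^{+}\ni\varphi(+\infty)$, one has $\varphi^{n}(\partial\widetilde{S}\setminus O^{-})\subset O^{+}$ for all large $n$ (see \cite{[CDP]}). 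I also need that the pairs $(\varphi(-\infty),\varphi(+\infty))$ of fixed points of hyperbolic elements are dense in $\partial\widetilde{S}\times\partial\widetilde{S}$; this upgrades the single-point density of Proposition \ref{dense1} and is the standard consequence that a high power product of two independent hyperbolic elements is again hyperbolic with fixed points close to prescribed ones (see \cite{[Coo]}).

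With these tools, given the four disjoint arcs I would choose a hyperbolic $a$ with $a(-\infty)\in U_{2}$, $a(+\infty)\in U_{1}$ and a hyperbolic $b$ with $b(-\infty)\in V_{1}$, $b(+\infty)\in V_{2}$, and set $\gamma=b^{-N}a^{N}$. For the first factor, any $\xi\in U_{1}$ is bounded away from $a(-\infty)\in U_{2}$, so $a^{N}\xi$ lies near $a(+\infty)\in U_{1}$, hence away from $b(+\infty)$, so $b^{-N}a^{N}\xi$ lies near $b(-\infty)\in V_{1}$; thus $\gamma(U_{1})\cap V_{1}\neq\emptyset$ for large $N$. For the second factor I run this in reverse: choosing $w\in b^{N}(V_{2})$ (these sets shrink to $b(+\infty)\in V_{2}$), one has $b^{-N}w\in V_{2}$, and since $w$ is near $b(+\infty)$ and hence away from $a(+\infty)$, the point $\eta=a^{-N}w$ lies near $a(-\infty)\in U_{2}$, while $\gamma\eta=b^{-N}w\in V_{2}$; thus $\gamma(U_{2})\cap V_{2}\neq\emptyset$ for large $N$. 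Taking $N$ large enough for both estimates, the pair $(\xi,\eta)\in U_{1}\times U_{2}$ maps into $V_{1}\times V_{2}$, which is the required transitivity, and the Baire argument then yields the dense orbit.

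The main obstacle is exactly this coordination. A single hyperbolic element, having north--south dynamics, collapses everything toward one attractive point and therefore cannot simultaneously steer two well-separated source arcs into two well-separated target arcs. The resolution is the composite $b^{-N}a^{N}$, whose two axes ``join'' the source arcs and the target arcs respectively, together with the uniformity of the convergence, which is where the quantitative estimates of \cite{[Coo]} enter; the preliminary disjointness reduction is precisely what keeps every tracked point away from the relevant repelling points so that the north--south estimates apply. The only other delicate point is the density of fixed-point pairs, again a standard feature of the hyperbolicity of the action. \rule{0.5em}{0.5em}
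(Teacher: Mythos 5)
The paper gives no proof of this proposition at all: it is one of three facts (Propositions \ref{dense1}--\ref{dense3}) attributed wholesale to Coornaert's thesis \cite{[Coo]}, so there is nothing to compare step by step. Your argument is the standard proof one would extract from that reference, and it is correct. The Baire reduction to topological transitivity is sound (the action on the product is the diagonal action, so meeting $V_1$ and $V_2$ in the two factors separately does produce a single point of $U_1\times U_2$ landing in $V_1\times V_2$), and the $b^{-N}a^{N}$ ping--pong works: the pairwise-disjointness reduction is exactly what lets you place the repelling neighborhood of $a(-\infty)$ inside $U_2$, hence uniformly away from $U_1$, and likewise for the other three fixed points, so the north--south estimates apply with a single large $N$. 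Two remarks on what your route costs. First, the essential input beyond hyperbolicity is the density of fixed-point \emph{pairs} $(\varphi(-\infty),\varphi(+\infty))$ in $\partial\widetilde{S}\times\partial\widetilde{S}$, which is precisely Proposition \ref{dense3} of the paper --- itself also only cited from \cite{[Coo]}. So in effect you derive \ref{dense2} from \ref{dense3}; this is logically admissible here (the paper does not derive \ref{dense3} from \ref{dense2}), but your proof is only self-contained once you also carry out the $g^{n}h^{-n}$ ping--pong you sketch for \ref{dense3}, starting from the density of individual fixed points in Proposition \ref{dense1}. Second, note that the statement concerns the full product including the diagonal; your transitivity argument covers this because every nonempty open box contains a sub-box with disjoint factors, and the dense orbit produced by the Baire argument automatically lies off the (invariant, nowhere dense) diagonal. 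Neither point is a gap; both are worth making explicit.
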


\begin{proposition}
\label{dense3}The set $\left \{  \left(  \phi(+\infty),\phi(-\infty)\right)
:\phi \in \pi_{1}\left(  S\right)  \right \}  $ is dense in $\partial
\widetilde{S}\times \partial \widetilde{S}.$
\end{proposition}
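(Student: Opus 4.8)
The plan is to prove the following constructive form of the statement: for every pair of open sets $U,V\subseteq\partial\widetilde S$ with disjoint closures there is an element $\phi\in\pi_{1}(S)$ with $\phi(+\infty)\in U$ and $\phi(-\infty)\in V$. Since the off-diagonal $\{(\xi,\eta):\xi\neq\eta\}$ is dense and open in $\partial\widetilde S\times\partial\widetilde S$ (the diagonal being nowhere dense in the product of two circles), every nonempty open subset of the product contains such a box $U\times V$, so this constructive form yields the asserted density. The engine of the argument is the \emph{north--south dynamics} of a hyperbolic isometry on the boundary: if $g\in\pi_{1}(S)$ has attracting point $g^{+}=g(+\infty)$ and repelling point $g^{-}=g(-\infty)$, then for all neighbourhoods $W^{+}\ni g^{+}$, $W^{-}\ni g^{-}$ there is an $N$ with $g^{n}\left(\partial\widetilde S\setminus W^{-}\right)\subseteq W^{+}$ for every $n\geq N$. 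This boundary statement is the standard strengthening of the convergence $g^{n}(x)\to g^{+}$ recorded above for points $x\in\widetilde S$, and is available in \cite{[CDP]},\cite{[Coo]}.

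First I would fix the two elements. Proposition \ref{dense1} gives density of $F_{h}$, and since $g$ and $g^{-1}$ interchange attracting and repelling points every point of $F_{h}$ is the attracting fixed point of some element; hence attracting fixed points are dense. I would therefore choose hyperbolic $g,h\in\pi_{1}(S)$ with $g^{+}\in U$ and $h^{+}\in V$. Because $F_{h}\cap U$ and $F_{h}\cap V$ are infinite, the finitely many coincidences can be avoided, so I may assume the four points $g^{+},g^{-},h^{+},h^{-}$ are pairwise distinct (alternatively one takes $h=\gamma g\gamma^{-1}$ with $\gamma g^{+}\in V$). I then pick neighbourhoods $A^{\pm}\ni g^{\pm}$, $B^{\pm}\ni h^{\pm}$ with pairwise disjoint closures and with $A^{+}\subseteq U$, $B^{+}\subseteq V$.

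The heart of the proof is a ping--pong estimate applied to $\phi_{n}:=g^{n}h^{-n}$. For $n$ large, north--south dynamics give $h^{-n}\left(\partial\widetilde S\setminus B^{+}\right)\subseteq B^{-}$ and $g^{n}\left(\partial\widetilde S\setminus A^{-}\right)\subseteq A^{+}$; since $B^{-}\cap A^{-}=\varnothing$ these combine to $\phi_{n}\left(\partial\widetilde S\setminus B^{+}\right)\subseteq A^{+}$. Thus $\phi_{n}$ maps the closed arc $J=\partial\widetilde S\setminus B^{+}$ into the proper subarc $A^{+}\subset J$, and such a self-map of an arc has an attracting fixed point, which lies in $A^{+}\subseteq U$. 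Running the identical argument for $\phi_{n}^{-1}=h^{n}g^{-n}$ shows $\phi_{n}^{-1}\left(\partial\widetilde S\setminus A^{+}\right)\subseteq B^{+}$, so $\phi_{n}$ has a repelling fixed point in $B^{+}\subseteq V$. Hence $\phi_{n}$ is hyperbolic with $\bigl(\phi_{n}(+\infty),\phi_{n}(-\infty)\bigr)\in U\times V$, which is what we wanted.

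The main obstacle I anticipate is not the ping--pong bookkeeping but the justification of the boundary north--south dynamics in the present setting of a Euclidean surface with conical points: the excerpt records only $g^{n}(x)\to g^{+}$ for $x\in\widetilde S$, whereas the construction needs uniform contraction on compact subsets of $\partial\widetilde S\setminus\{g^{-}\}$. I would secure this by transferring the convergence to the boundary through the visual metric together with the stability of quasigeodesics already invoked in \eqref{asymptoric_constant}, or simply cite the corresponding statement in \cite{[Coo]}, as the authors indicate. A secondary, routine point is the genericity claim that $g,h$ may be taken with four distinct fixed points, which follows from the density of $F_{h}$ in Proposition \ref{dense1}.
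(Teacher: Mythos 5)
Your proof is correct in outline, but note first that the paper does not actually prove this proposition: Propositions \ref{dense1}--\ref{dense3} are simply quoted from Coornaert's thesis \cite{[Coo]}, using $\Lambda=\partial\widetilde{S}$. So your ping--pong construction is not an alternative to an argument in the paper; it is a self-contained replacement for the citation, and it is essentially the standard argument underlying the cited result. What your route buys is transparency about the one genuinely nontrivial input, namely north--south dynamics of a hyperbolic isometry on $\partial\widetilde{S}$ (uniform contraction of $\partial\widetilde{S}\setminus W^{-}$ into $W^{+}$), which the paper's preliminaries indeed do not record and which must itself be imported from \cite{[Coo]} or \cite{Gr}; everything else you use (Proposition \ref{dense1}, the reduction to boxes $U\times V$ with disjoint closures via density of the off-diagonal) is already available in the text. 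The chain $h^{-n}(\partial\widetilde{S}\setminus B^{+})\subseteq B^{-}$, $g^{n}(B^{-})\subseteq A^{+}$ is set up correctly, and the disjointness $B^{-}\cap A^{-}=\emptyset$ is exactly what makes it compose.

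Three small points to tighten, none of them gaps. First, the claim that $g,h$ can be chosen with four pairwise distinct fixed points deserves a cleaner justification than ``finitely many coincidences'': the relevant fact is that two hyperbolic elements of a discrete isometry group have either equal or disjoint fixed pairs (if they shared exactly one endpoint, the elements $g^{n}h^{-n}$ would move a basepoint a uniformly bounded amount, contradicting properness of the action), and since $F_{h}\cap V$ is infinite there are infinitely many pairwise disjoint fixed pairs meeting $V$, all but at most one disjoint from $\left\{ g^{+},g^{-}\right\}$; your conjugation alternative also works but quietly uses minimality of the boundary action, which the paper never states. Second, you should record that $\phi_{n}=g^{n}h^{-n}\neq 1$ (otherwise $g$ and $h$ would share fixed points), so that $\phi_{n}$ is hyperbolic and its boundary fixed-point set is exactly $\left\{ \phi_{n}(+\infty),\phi_{n}(-\infty)\right\}$; the fixed point you produce in $A^{+}$ is then identified as $\phi_{n}(+\infty)$ by forward invariance of $A^{+}$, not by an appeal to an ``attracting fixed point of a self-map of an arc.'' Third, since that identification only places $\phi_{n}(+\infty)$ in the closure of $A^{+}$, choose $A^{+}$ and $B^{+}$ with $\overline{A^{+}}\subseteq U$ and $\overline{B^{+}}\subseteq V$ from the start.
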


\section{Density in $\partial \widetilde{S}.$}

In this section we first show that the set of points in $\partial \widetilde
{S}$ for which the class of the corresponding geodesic rays is not a
singleton, forms a dense subset of $\partial \widetilde{S}.$ We fix throughout
a base point $\widetilde{x_{0}}\in \widetilde{S}.$

\begin{proposition}
\label{dense_notunique} The set
\[
Y=\left \{  \xi \in \partial \widetilde{S}\bigm \vert%
\begin{array}
[c]{c}%
\exists \mathrm{\mathrm{\  \ distinct\ }\ geodesic\mathrm{\  \ }rays\mathrm{\  \ }%
}r_{1},r_{2}\mathrm{\mathrm{\ }\ such\mathrm{\  \ that}}\\
r_{1}\left(  0\right)  =\widetilde{x_{0}}=r_{2}\left(  0\right)
\mathrm{\  \ and\  \ }r_{1}\left(  \infty \right)  =\xi=r_{2}\left(
\infty \right)
\end{array}
\right \}
\]
is dense in $\partial \widetilde{S}.$
\end{proposition}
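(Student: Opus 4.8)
The plan is to deduce density of $Y$ from the minimality of the $\pi_1(S)$-action on $\partial\widetilde S$, after first producing a single configuration of two distinct asymptotic rays and then anchoring such a configuration at the fixed base point $\widetilde{x_0}$. The whole argument rests on Corollary \ref{Cor2Big}: a geodesic segment meeting a geodesic ray in two isolated points splices into a second ray with the same endpoint at infinity.

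First I would introduce the base-point-free version of $Y$, namely the set $Y'$ of all $\xi\in\partial\widetilde S$ for which there exist two distinct geodesic rays $r_1\neq r_2$ (with arbitrary, possibly different, origins) satisfying $r_1(\infty)=r_2(\infty)=\xi$. Since every $\varphi\in\pi_1(S)$ is an isometry of $\widetilde S$ carrying geodesic rays to geodesic rays and acting compatibly on $\partial\widetilde S$, the set $Y'$ is $\pi_1(S)$-invariant, hence a union of $\pi_1(S)$-orbits. Because $\Lambda=\partial\widetilde S$ the action of $\pi_1(S)$ on $\partial\widetilde S$ is minimal, so every orbit is dense (this is the content underlying Propositions \ref{dense1}--\ref{dense3}). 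Consequently, once $Y'$ is shown to be nonempty, it is automatically dense in $\partial\widetilde S$.

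To see $Y'\neq\emptyset$ I would exploit a conical point of large angle. By the Gauss--Bonnet formula $\sum_i\bigl(2\pi-\theta(s_i)\bigr)=2\pi\chi(S)$, together with $\chi(S)\le 0$ (as the genus is $\ge 1$) and $\theta(s_i)\neq 2\pi$, at least one conical point $v$ has $\theta(v)>2\pi$; fix a lift $\widetilde v$. At $\widetilde v$ a geodesic arriving from a fixed direction admits an entire arc, of width $\theta(v)-2\pi>0$, of directions in which it continues as a local geodesic (each side making angle $\ge\pi$). Choosing two distinct such continuations and carrying them out to the boundary produces, exactly as in Example \ref{paradeigma}, two distinct geodesics that bend at $\widetilde v$; by Lemma \ref{dyoBig} any further isolated intersection forced along them occurs only at conical points of angle $>2\pi$, and the hyperbolicity of $\widetilde S$ keeps the two geodesics at bounded Hausdorff distance. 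Their common positive endpoint then lies in $Y'$. The delicate part here is guaranteeing that the two bent geodesics remain global geodesics and genuinely share a single endpoint at infinity rather than diverging; this is where the stability of quasi-geodesics and Lemma \ref{dyoBig} are used to close up the bigon.

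Finally I would transfer density from $Y'$ to $Y$. Given $\xi\in Y'$, realized by a pair of distinct asymptotic rays bounding a bigon whose corners $x,y$ are conical of angle $>2\pi$ (Lemma \ref{dyoBig}), choose by Proposition \ref{existence rays, lnes} a geodesic ray $r$ from $\widetilde{x_0}$ to $\xi$. Arguing that $r$ can be made to traverse the pair $x,y$ (replacing the bigon by a suitable $\pi_1(S)$-translate lying on the path of $r$ if necessary), Corollary \ref{Cor2Big} lets me splice $r$ with the opposite side of the bigon between $x$ and $y$, producing a second geodesic ray from $\widetilde{x_0}$ to the same $\xi$ that differs from $r$ on $[x,y]$. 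Hence $\xi\in Y$, and $Y$ is dense. I expect the main obstacle to be precisely this anchoring step: ensuring that the base-point-free non-uniqueness recorded by $Y'$ can always be realized by a bigon sitting on an $\widetilde{x_0}$-based ray, so that it is upgraded to non-uniqueness of rays all emanating from the fixed point $\widetilde{x_0}$.
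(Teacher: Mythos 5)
There is a genuine gap, and it is structural rather than technical: your argument never uses a conical point of angle $<2\pi$, whereas the statement is false without one. If every conical angle exceeds $2\pi$ (which Gauss--Bonnet permits for genus $\geq2$), then $\widetilde{S}$ is CAT(0), geodesic rays from a fixed point to a boundary point are unique, and $Y=\emptyset$. The paper's proof makes the role of small angles explicit: assuming $Y$ misses an interval $\left[\eta,\rho\right]$, it places a conical point $\widetilde{s}$ with $\theta(\widetilde{s})<2\pi$ inside the region swept by the unique rays $r_{\xi}$, $\xi\in\left[\eta,\rho\right]$, and derives a contradiction with the connectedness of $\left[\eta,\rho\right]$ by splitting it into the two closed disjoint sets $I(\eta)$, $I(\rho)$ according to the side of $r_{\xi}$ on which $\widetilde{s}$ lies; Lemma \ref{positive_distance} (which is precisely about small-angle points) is what makes these sets disjoint and closed. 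Any proof that, like yours, draws its non-uniqueness only from a large-angle cone point cannot be repaired.

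Concretely, the two steps that fail are these. First, your construction of a nontrivial element of $Y'$: branching a geodesic at a lift $\widetilde{v}$ of a cone point with $\theta(v)>2\pi$ does produce two distinct geodesic rays, but two geodesics leaving a common point in different directions generically converge to \emph{different} boundary points; stability of quasi-geodesics is irrelevant here because each branch is already a genuine geodesic and nothing forces the two branches to stay at bounded distance. Closing the bigon requires a second large-angle cone point positioned exactly so that both branches re-converge (as in the very special Example \ref{paradeigma}); Lemma \ref{dyoBig} only describes what the corners of a bigon must look like if one exists, it does not produce one. Second, the transfer from $Y'$ to $Y$: even granting a compact bigon somewhere in $\widetilde{S}$, there is no reason a ray from $\widetilde{x_0}$ to a given $\xi$ should contain an entire side of some $\pi_{1}(S)$-translate of that bigon; the orbit of the bigon is discrete and a single ray will in general miss all translates, so Corollary \ref{Cor2Big} cannot be invoked. (The minimality/invariance part of your plan is fine as far as it goes, but note that $Y'$ as literally defined is trivially all of $\partial\widetilde{S}$, since any ray and a proper subray of it are distinct rays with the same endpoint; you would need to define $Y'$ via genuine bigons, which only sharpens the dependence on the two failing steps above.)
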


\begin{proof}
As $\partial \widetilde{S}$ is homeomorphic to $\mathbb{S}^{1}$ we will be
talking about intervals in $\partial \widetilde{S}$ and we will mean open
(resp. closed) connected subsets of $\partial \widetilde{S}$ homeomorphic to
open (resp. closed) intervals in $\mathbb{S}^{1}.$ It suffices to show that
for any interval $I\subset \partial \widetilde{S},$ $I\cap Y\neq \emptyset
.$\newline Claim: Let $\xi \notin Y,$ $r_{\xi}$ the (unique) geodesic ray with
$r_{\xi}\left(  0\right)  =\widetilde{x}_{0},$ $r_{\xi}\left(  +\infty \right)
=\xi$ and $r$ an arbitrary geodesic ray with $r\left(  0\right)
=\widetilde{x}_{0}$ and $r\left(  +\infty \right)  \neq \xi.$ Then
$\operatorname{Im}r_{\xi}\cap \operatorname{Im}r$ is either, a geodesic segment
of the form $\left[  \widetilde{x}_{0},\widetilde{x}_{1}\right]  $ for some
$\widetilde{x}_{1}\in \operatorname{Im}r_{\xi}$ or, a singleton namely
$\left \{  \widetilde{x}_{0}\right \}  .$\newline Similarly, if $r$ is an
arbitrary geodesic segment then $\operatorname{Im}r_{\xi}\cap \operatorname{Im}%
r$ is either, a geodesic sub-segment of $r$ or, a singleton or, the empty set.

For the proof of the Claim observe that $\operatorname{Im}r_{\xi}%
\cap \operatorname{Im}r$ is necessarily connected. For, if $x,y$ belong to
distinct connected components of $\operatorname{Im}r_{\xi}\cap
\operatorname{Im}r$ then $r|_{\left[  x,y\right]  }$ does not coincide with
$r_{\xi}|_{\left[  x,y\right]  }.$ Thus, the geodesic ray
\[
r^{\prime}=r_{\xi}|_{\left[  \widetilde{x}_{0},x\right]  }\cup r|_{\left[
x,y\right]  }\cup r_{\xi}|_{\left[  y,+\infty \right]  }%
\]
is distinct from $r_{\xi}$ and, clearly, $r^{\prime}\left(  +\infty \right)
=\xi,$ a contradiction. As both $\operatorname{Im}r_{\xi},\operatorname{Im}r$
are homeomorphic to $\left[  0,+\infty \right)  ,$ the Claim follows. The proof
in the case $r$ is a geodesic segment is similar.

Returning to the proof of the Proposition, suppose, on the contrary, that for
some closed interval $\left[  \eta,\rho \right]  \subset \partial \widetilde{S}$
we have $\left[  \eta,\rho \right]  \cap Y=\emptyset.$ By the Claim,
$\operatorname{Im}r_{\eta}\cap \operatorname{Im}r_{\rho}=\left \{  \widetilde
{x}_{0}\right \}  $ or, $\left[  \widetilde{x}_{0},\widetilde{x}_{1}\right]  .$
We may assume that $\operatorname{Im}r_{\eta}\cap \operatorname{Im}r_{\rho
}=\left \{  \widetilde{x}_{0}\right \}  $ otherwise, replace in the sequel the
union $\operatorname{Im}r_{\eta}\cup \operatorname{Im}r_{\rho}$ by
\[
\operatorname{Im}r_{\eta}\cup \operatorname{Im}r_{\rho}\setminus \left[
\widetilde{x}_{0},\widetilde{x}_{1}\right)  .
\]
Then, inside the compact, convex set $\widetilde{S}\cup \partial \widetilde{S}$
the union%
\[
\operatorname{Im}r_{\eta}\cup \operatorname{Im}r_{\rho}\cup \left[  \eta
,\rho \right]
\]
splits the set $\widetilde{S}\cup \partial \widetilde{S}$ into two convex
(closed) subsets whose common boundary is the union $\operatorname{Im}r_{\eta
}\cup \operatorname{Im}r_{\rho}\cup \left \{  \eta,\rho \right \}  .$ Observe that
convexity follows from the above Claim and the assumption $\left[  \eta
,\rho \right]  \cap Y=\emptyset.$ Denote by $\widetilde{S}\left(  \left[
\eta,\rho \right]  \right)  $ the subset of $\widetilde{S}\cup \partial
\widetilde{S}$ which contains $\left[  \eta,\rho \right]  ,$ Pick and fix a
conical point $\widetilde{s}$ in the interior of $\widetilde{S}\left(  \left[
\eta,\rho \right]  \right)  $ with $\theta \left(  \widetilde{s}\right)  <2\pi.$

For each $\xi \in \left[  \eta,\rho \right]  ,$ consider the (unique, as $\left[
\eta,\rho \right]  \cap Y=\emptyset$) geodesic ray $r_{\xi}$ with $r_{\xi
}\left(  0\right)  =x_{0}$ and $r_{\xi}\left(  +\infty \right)  =\xi.$ By the
Claim, as above, $\operatorname{Im}r_{\xi}\cup \left \{  \xi \right \}  $ splits
$\widetilde{S}\left(  \left[  \eta,\rho \right]  \right)  $ into two closed
convex subsets $\widetilde{S}\left(  \left[  \eta,\xi \right]  \right)  $ and
$\widetilde{S}\left(  \left[  \xi,\rho \right]  \right)  ,$ the former
containing $\eta$ and the latter containing $\rho,$ whose intersection is
$\operatorname{Im}r_{\xi}\cup \left \{  \xi \right \}  .$ Define
\[
I\left(  \eta \right)  =\left \{  \xi \in \left[  \eta,\rho \right]
\bigm \vert \widetilde{s}\notin \widetilde{S}\left(  \left[  \eta,\xi \right]
\right)  \right \}  .
\]
Similarly, define $I\left(  \rho \right)  .$ We will show that $I\left(
\eta \right)  ,I\left(  \rho \right)  $ are closed and disjoint subsets of
$\left[  \eta,\rho \right]  ,$ thus contradicting the connectedness of $\left[
\eta,\rho \right]  .$\newline Let $\xi \in I\left(  \eta \right)  \cap I\left(
\rho \right)  .$ Then, by definition, $\widetilde{s}\in$ $\widetilde{S}\left(
\left[  \eta,\xi \right]  \right)  \cap \widetilde{S}\left(  \left[  \xi
,\rho \right]  \right)  .$ It follows that $\widetilde{s}\in \operatorname{Im}%
r_{\xi}$ which contradicts Lemma \ref{positive_distance}. This shows that
$I\left(  \eta \right)  ,I\left(  \rho \right)  $ are disjoint.

To see that $I\left(  \eta \right)  $ is closed, let $\left \{  \xi_{n}\right \}
\subset I\left(  \eta \right)  $ be a sequence converging to $\xi$ with
$r_{\xi_{n}},r_{\xi}$ the corresponding (unique) geodesic rays with positive
points $\xi_{n},\xi.$ We want to show that $\xi \in I\left(  \eta \right)  .$
Assume, on the contrary, that $\xi \in I\left(  \rho \right)  ,$ i.e
$\widetilde{s}\notin \widetilde{S}\left(  \left[  \xi,\rho \right]  \right)  .$
Let $\left[  x_{\eta},\widetilde{s}\right]  $ (resp. $\left[  x_{\rho
},\widetilde{s}\right]  )$ be geodesic segments realizing the distance, say
$d_{\eta}$ (resp. $d_{\rho}$) of $\widetilde{s}$ form $\operatorname{Im}%
r_{\eta}$ (resp. $\operatorname{Im}r_{\rho}$) for some point $x_{\eta}%
\in \operatorname{Im}r_{\eta}$ (resp. $x_{\rho}\in \operatorname{Im}r_{\rho}$).
Consider the following neighborhood $\mathcal{N}$ of geodesic rays around
$r_{\xi}$ determined by the positive number $C/2$ (cf. Lemma
\ref{positive_distance}) and the compact set $\left[  0,2\left(  d_{\eta
}+d_{\rho}\right)  \right]  :$%
\[
\mathcal{N}=\left \{  r\bigm \vert r\left(  0\right)  =\widetilde{x}%
_{0}\mathrm{\mathrm{\ }and\mathrm{\ }}\forall t\in \left[  0,2\left(  d_{\eta
}+d_{\rho}\right)  \right]  ,d\left(  r\left(  t\right)  ,r_{\xi}\left(
t\right)  \right)  <C/2\right \}  .
\]
Note that if $r\in \mathcal{N},$ then $\operatorname{Im}r|_{\left[  2\left(
d_{\eta}+d_{\rho}\right)  ,+\infty \right)  }$ does not intersect the union of
segments $\left[  x_{\eta},\widetilde{s}\right]  \cup \left[  x_{\rho
},\widetilde{s}\right]  ,$ otherwise, $r$ would not be a (global) geodesic
ray. Clearly, for all $n$ large enough, $r_{\xi_{n}}\in \mathcal{N},$ thus,
$r_{\xi_{n}}\left(  t\right)  \notin \left[  x_{\eta},\widetilde{s}\right]
\cup \left[  x_{\rho},\widetilde{s}\right]  $ for all $t>2\left(  d_{\eta
}+d_{\rho}\right)  ,$ which implies that $\widetilde{s}\notin \widetilde
{S}\left(  \left[  \xi_{n},\rho \right]  \right)  .$ In other words, $\xi
_{n}\in I\left(  \rho \right)  $, a contradiction.
\end{proof}

\noindent We now show the analogous result for geodesic lines. We write
$\partial^{2}\widetilde{S}$ for the product $\partial \widetilde{S}%
\times \partial \widetilde{S}$ with the diagonal excluded.

\begin{proposition}
\label{line_dense}The set
\[
Z=\left \{  \left(  \eta,\xi \right)  \in \partial^{2}\widetilde{S}\biggm \vert%
\begin{array}
[c]{c}%
\exists \mathrm{\mathrm{\ distinct\ }geodesic\mathrm{\ lines\ }}\gamma
_{1},\gamma_{2}\mathrm{\mathrm{\ }such\mathrm{\ that}}\\
\gamma_{1}\left(  -\infty \right)  =\eta=\gamma_{2}\left(  -\infty \right)
\mathrm{\  \ and\  \ }\gamma_{1}\left(  \infty \right)  =\xi=\gamma_{2}\left(
\infty \right)
\end{array}
\right \}
\]
is dense in $\partial^{2}\widetilde{S}.$
\end{proposition}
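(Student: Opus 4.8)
The plan is to deduce this from the ray version (Proposition \ref{dense_notunique}) together with the density of axis-endpoint pairs (Proposition \ref{dense3}), using the cocompact action of $\pi_1(S)$ to transport a non-uniqueness phenomenon from one point of $\partial\widetilde{S}$ to any prescribed pair of boundary points. Concretely, to show $Z$ is dense in $\partial^2\widetilde{S}$ it suffices to show that every basic open set $U\times V\subset\partial^2\widetilde{S}$ (with $\overline{U}\cap\overline{V}=\emptyset$) meets $Z$.

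First I would use Proposition \ref{dense_notunique} to fix a point $\xi_0\in\partial\widetilde{S}$ that admits two distinct geodesic rays $r_1,r_2$ emanating from $\widetilde{x}_0$. By Corollary \ref{Cor2Big} and the discussion of Example \ref{paradeigma}, the mechanism producing this non-uniqueness is the presence of a conical point of angle $>2\pi$ at which two geodesic segments split and re-merge; crucially, this non-uniqueness is a local feature along the rays, so it survives when one prepends a long initial geodesic segment or when one considers a geodesic line through the relevant region rather than a ray. The key step is therefore to promote the two rays $r_1,r_2$ into two distinct geodesic lines sharing both endpoints: I would run a line through the splitting configuration so that it is forced to pass through the conical point where the two sub-segments diverge, giving a pair $(\eta_0,\xi_0)\in Z$. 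In other words, I first establish that $Z$ is nonempty, exhibiting at least one pair realized by two distinct lines.

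Next I would invoke Proposition \ref{dense3}: the set of pairs $(\phi(+\infty),\phi(-\infty))$ with $\phi\in\pi_1(S)$ is dense in $\partial^2\widetilde{S}$. Since $\pi_1(S)$ acts on $\partial\widetilde{S}$ by homeomorphisms and acts diagonally on $\partial^2\widetilde{S}$, and since the non-uniqueness property defining $Z$ is manifestly $\pi_1(S)$-invariant (if $\gamma_1,\gamma_2$ are distinct lines with common endpoints, so are $\phi\gamma_1,\phi\gamma_2$), the set $Z$ is $\pi_1(S)$-invariant. Given a target open box $U\times V$, I would choose $\phi\in\pi_1(S)$ whose attracting and repelling fixed points lie in $U$ and $V$ respectively; the powers $\phi^{\pm n}$ act on $\partial^2\widetilde{S}$ as a north–south dynamics, so $\phi^n$ eventually carries the fixed nonempty pair $(\eta_0,\xi_0)\in Z$ into $U\times V$. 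This gives $(\phi^n\eta_0,\phi^n\xi_0)\in Z\cap(U\times V)$, proving density.

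The main obstacle I anticipate is the nonemptiness step: I must argue rigorously that the two-ray splitting of Proposition \ref{dense_notunique} can be realized along a bi-infinite geodesic line, i.e.\ that I can arrange a conical point of angle $>2\pi$ through which two distinct geodesic lines pass while agreeing at both ends. The subtlety is that in this setting geodesic lines need not be unique and need not extend, so I cannot simply extend $r_1,r_2$ backwards and hope they agree; instead I would use the convex region bounded by two outermost rays (as described in the Introduction) to control the backward continuation, or directly build the two lines from a pair of geodesic segments crossing at two points of a large-angle conical point as in Lemma \ref{dyoBig}. Once nonemptiness is secured, the transitivity argument via Proposition \ref{dense3} is routine.
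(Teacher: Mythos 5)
Your argument has a genuine gap at its decisive step: transporting a single pair $(\eta_{0},\xi_{0})\in Z$ into an arbitrary box $U\times V$ by the diagonal action does not work. If $\phi$ is hyperbolic with $\phi(+\infty)\in U$ and $\phi(-\infty)\in V$, north--south dynamics sends \emph{every} boundary point other than $\phi(-\infty)$ to $\phi(+\infty)$ under iteration of $\phi$; hence, unless one of $\eta_{0},\xi_{0}$ happens to coincide with $\phi(-\infty)$, \emph{both} coordinates of $\phi^{n}(\eta_{0},\xi_{0})$ converge to $\phi(+\infty)$. The orbit therefore accumulates on the diagonal and eventually lies in $U\times U$, never in $U\times V$ (which is bounded away from the diagonal since $\overline{U}\cap\overline{V}=\emptyset$). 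More generally, the $\pi_{1}(S)$-orbit of a single point of $\partial^{2}\widetilde{S}$ need not be dense: Proposition \ref{dense2} asserts only that \emph{some} orbit is dense, and, for instance, the orbit of the fixed-point pair of one hyperbolic element consists of the endpoint pairs of the lifts of a single closed geodesic, which is a discrete family. So ``$Z$ nonempty and $\pi_{1}(S)$-invariant'' does not imply ``$Z$ dense,'' and your reduction collapses. (Your nonemptiness step is also only a sketch --- producing two distinct bi-infinite lines with \emph{both} endpoints equal is not automatic from two distinct rays, precisely because geodesics here need not extend --- but that is the lesser problem.)

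What is actually needed is a statement uniform in the first coordinate: for \emph{every} $\xi_{0}\in\partial\widetilde{S}$ and every closed interval $[\eta,\rho]$ with $\xi_{0}\notin[\eta,\rho]$, some $\xi\in[\eta,\rho]$ satisfies $(\xi_{0},\xi)\in Z$; this immediately gives density of $Z$ in $\partial^{2}\widetilde{S}$. The paper obtains this by rerunning the connectedness argument of Proposition \ref{dense_notunique} with lines emanating from $\xi_{0}$ in place of rays emanating from $\widetilde{x}_{0}$: assuming each $\gamma_{\xi}\in(\xi_{0},\xi)$ is unique for $\xi\in[\eta,\rho]$, one fixes a conical point $\widetilde{s}$ of angle $<2\pi$ in the convex region bounded by $\gamma_{\eta}$ and $\gamma_{\rho}$, defines $I(\eta)$ and $I(\rho)$ according to which side of $\gamma_{\xi}$ the point $\widetilde{s}$ lies on, and shows (using Lemma \ref{positive_distance} and the stability of the segments realizing $d(\widetilde{s},\operatorname{Im}\gamma_{\eta})$ and $d(\widetilde{s},\operatorname{Im}\gamma_{\rho})$ under uniform convergence of the $\gamma_{\xi_{n}}$) that these sets are closed, disjoint and cover $[\eta,\rho]$, contradicting connectedness. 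I would redo your proof along these lines rather than trying to repair the dynamical transport.
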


\begin{proof}
It suffices to show that for arbitrary $\xi_{0}\in \partial \widetilde{S}$ and
any closed interval $\left[  \eta,\rho \right]  \subset \partial \widetilde{S}$
with $\xi_{0}\notin \left[  \eta,\rho \right]  ,$ there exist two distinct
geodesics $\gamma_{1},\gamma_{2}$ with $\gamma_{1}\left(  -\infty \right)
=\xi_{0}=\gamma_{2}\left(  -\infty \right)  $ and $\gamma_{1}\left(
+\infty \right)  =\xi=\gamma_{2}\left(  +\infty \right)  .$ For, if $\forall
\xi \in \left[  \eta,\rho \right]  $ there exist a unique geodesic line
$\gamma_{\xi}$ with $\gamma_{\xi}\in \left(  \xi_{0},\xi \right)  $ we may
repeat the argument in the proof of the previous proposition as follows: pick
and fix a singular point $\widetilde{s}$ with $\theta \left(  \widetilde
{s}\right)  <2\pi$ in the interior of the (convex) set $\widetilde{S}\left(
\left[  \eta,\rho \right]  \right)  $ bounded by the geodesic lines joining the
pairs $(\xi_{0},\eta)$ and $(\xi_{0},\rho).$ For each $\xi \in \left[  \eta
,\rho \right]  ,$ consider the (unique, as $\left[  \eta,\rho \right]  \cap
Z=\emptyset$) geodesic $\gamma_{\xi}$ with $\gamma_{\xi}\left(  -\infty
\right)  =\xi_{0}$ and $\gamma_{\xi}\left(  +\infty \right)  =\xi.$ By the
Claim, as above, $\operatorname{Im}\gamma_{\xi}\cup \left \{  \xi \right \}  $
splits $\widetilde{S}\left(  \left[  \eta,\rho \right]  \right)  $ into two
closed convex subsets $\widetilde{S}\left(  \left[  \eta,\xi \right]  \right)
$ and $\widetilde{S}\left(  \left[  \xi,\rho \right]  \right)  ,$ the former
containing $\eta$ and the latter containing $\rho,$ whose intersection is
$\operatorname{Im}\gamma_{\xi}\cup \left \{  \xi_{0},\xi \right \}  .$ Define
\[
I\left(  \eta \right)  =\left \{  \xi \in \left[  \eta,\rho \right]
\bigm \vert \widetilde{s}\notin \widetilde{S}\left(  \left[  \eta,\xi \right]
\right)  \right \}  .
\]
Similarly, define $I\left(  \rho \right)  .$ Then, as above, $I\left(
\eta \right)  ,I\left(  \rho \right)  $ are closed and disjoint subsets of
$\left[  \eta,\rho \right]  ,$ a contradiction.

\noindent Proof of $I\left(  \eta \right)  $ closed: 
Let $\left \{  \xi_{n}\right \}  \subset I\left(  \eta \right)  $ be a sequence
converging to $\xi$ with $\gamma_{\xi_{n}},\gamma_{\xi}$ the corresponding
(unique) geodesic lines with $\gamma_{\xi_{n}}\left(  -\infty \right)  =\xi
_{0}=\gamma_{\xi}\left(  -\infty \right)  $ and $\gamma_{\xi_{n}}\left(
\infty \right)  =\xi_{n},$ $\gamma_{\xi}\left(  \infty \right)  =\xi.$ Pick a
parametrization for $\gamma_{\xi},$ for example, set $\gamma_{\xi}\left(
0\right)  $ to be a point of minimal distance from $\widetilde{s}$ and assume,
on the contrary, that $\xi \in I\left(  \rho \right)  .$

As above, let $\left[  x_{\eta},\widetilde{s}\right]  $ (resp. $\left[
x_{\rho},\widetilde{s}\right]  )$ be geodesic segments realizing the distance
$d_{\eta}$ (resp. $d_{\rho}$) of $\widetilde{s}$ from $\operatorname{Im}%
\gamma_{\eta}$ (resp. $\operatorname{Im}\gamma_{\rho}$) for some point
$x_{\eta}\in \operatorname{Im}\gamma_{\eta}$ (resp. $x_{\rho}\in
\operatorname{Im}\gamma_{\rho}$). Consider the neighborhood $\mathcal{N}$ of
geodesic lines around $\gamma_{\xi}$ determined by the positive number $C/2$
(cf. Lemma \ref{positive_distance}) and the compact set $K=\left[  -2\left(
d_{\eta}+d_{\rho}\right)  ,2\left(  d_{\eta}+d_{\rho}\right)  \right]  .$
Clearly, if $\gamma \in \mathcal{N}$ then $\gamma|_{\mathbb{R}\setminus K}$ does
not intersect the union of segments $\left[  x_{\eta},\widetilde{s}\right]
\cup \left[  x_{\rho},\widetilde{s}\right]  .$ The same holds for $\gamma_{n},$
$n$ large enough, thus, $\widetilde{s}\notin \widetilde{S}\left(  \left[
\gamma_{n}\left(  +\infty \right)  ,\rho \right]  \right)  $, equivalently,
$\xi_{n}\in I\left(  \rho \right)  $ a contradiction.
\end{proof}

We conclude this Section with the following proposition which indicates that
uniqueness of geodesic rays is a property which depends on the choice of base point.

\begin{proposition}
Let $\xi \in \partial \widetilde{S}$ be arbitrary. Then for some
point $x\in \widetilde{S}$ there exist at least two geodesic
rays $r_{1},$ $r_{2}$ such that $r_{1}\left(  0\right)  =r_{2}\left(
0\right)  =x$ and $r_{1}\left(  +\infty \right)  =r_{2}\left(
+\infty \right)  =\xi.$
\end{proposition}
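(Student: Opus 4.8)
The plan is to produce, for the given $\xi$, a single conical point of angle $<2\pi$ lying alongside a geodesic ray to $\xi$, and then to exploit the non-uniqueness of geodesics caused by such a point (the very phenomenon highlighted in the Introduction) to split off a second ray with the same endpoint. First I would fix, by Proposition \ref{existence rays, lnes}, a geodesic ray $r$ with $r(0)=\widetilde{x}_{0}'$ and $r(+\infty)=\xi$; if $r$ is already non-unique we are done, so assume it is the unique ray from $\widetilde{x}_{0}'$ to $\xi$. Since $\pi_{1}(S)$ acts cocompactly on $\widetilde{S}$, the orbit of a fixed conical point $\widetilde{s}$ with $\theta(\widetilde{s})<2\pi$ is $R$-dense for some $R>0$; hence there are such conical points within distance $R$ of $\operatorname{Im}r$ at arbitrarily large parameter. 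I would fix one, call it $s$, with foot point $r(t_{0})$ for $t_{0}$ large, and note that by Lemma \ref{positive_distance} we have $d(s,\operatorname{Im}r)\ge C$, so $r$ passes $s$ on one definite side at a bounded but positive distance $\delta\in[C,R]$.

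Next I would take the base point $x=r(t_{1})$ with $t_{1}<t_{0}$ chosen so that $s$ lies between $x$ and $\xi$ along $r$, and examine the rays emanating from $x$ as a function of their initial direction. As the direction sweeps from that of $r$ toward the direction pointing at $s$ and beyond, the ray moves from passing $s$ on its original (say left) side to passing $s$ on the right side, the intervening directions being unable to send the ray through $s$ by Lemma \ref{positive_distance}. The endpoint-at-infinity of these rays varies continuously on $\partial\widetilde{S}$, and the angle deficit $2\pi-\theta(s)>0$ at $s$ produces a fold: the direction aimed at $s$ is a transition across which the endpoint map is locally two-to-one. Because $r$ already realizes $\xi$ on the left of $s$, a direction on the right of $s$ should realize the same $\xi$, yielding a second geodesic ray $r'$ from $x$ with $r'(+\infty)=\xi$. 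Then $r,r'$ are two distinct rays from the common base point $x$ to $\xi$, as required.

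The main obstacle is precisely this last step: guaranteeing that the right-passing ray reaches exactly $\xi$ rather than some nearby boundary point. I expect to settle it by the intermediate-value argument sketched above, applied to the continuous endpoint map and combined with the left/right outermost-ray structure established earlier in the paper; concretely, one must verify that $\xi$ lies in the ``shadow'' of $s$ as seen from $x$, i.e. within the angular interval of boundary points reachable by two rays around $s$, which is forced once $s$ lies between $x$ and $\xi$ at the controlled distance $\delta$.

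An alternative, less constructive route is to set $W=\{\xi\in\partial\widetilde{S}\mid\exists\,x\text{ with at least two rays }x\to\xi\}$, observe that $W$ is $\pi_{1}(S)$-invariant and contains the dense set of Proposition \ref{dense_notunique}, and invoke minimality of the action of $\pi_{1}(S)$ on $\partial\widetilde{S}$ (valid since $\Lambda=\partial\widetilde{S}$), whereby the only nonempty closed invariant subset is all of $\partial\widetilde{S}$. There the difficulty migrates to proving $W$ closed: given $\xi_{n}\to\xi$ with witnesses, the witnessing pairs of rays may escape to infinity. I would control this by anchoring each witness at the enclosed conical point of angle $<2\pi$, which by Lemma \ref{positive_distance} keeps the two rays at distance $\ge C$ on opposite sides and so prevents the limiting rays from collapsing, while co-compactness is used to keep a representative of the configuration in a fixed fundamental domain.
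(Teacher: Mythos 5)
Your guiding intuition --- that a conical point of angle $<2\pi$ sitting near a ray to $\xi$ is what forces branching --- is the same one the paper exploits, but neither of your two routes closes the argument, and each has a concrete gap. In the first route, the ``sweep of directions'' at $x$ and the ``continuous endpoint map'' are not well-defined objects in this setting: the Introduction points out that extension of geodesics fails here (a geodesic segment need not extend to any longer one), so not every initial direction at $x$ yields a geodesic ray, and a single direction can carry several; there is no continuous map from directions to $\partial\widetilde{S}$ to which an intermediate-value argument applies. Worse, the step you yourself flag as the main obstacle --- that $\xi$ lies in the ``shadow'' of $s$ --- is not forced by $s$ lying within distance $R$ of $\operatorname{Im}r$: by Lemma \ref{positive_distance} the ray $r$ keeps distance at least $C$ from $s$ forever, the shadow of $s$ is not a clean Euclidean wedge once other cone points intervene, and deciding whether the ideal point $\xi$ is reachable by two rays around $s$ is exactly the statement to be proved, so the argument is circular at its crux.

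The second route has a parallel gap: minimality of the boundary action only upgrades a \emph{closed} invariant set to the whole boundary, and closedness of $W$ is not established. Translating the witnessing configuration for $\xi_{n}$ into a fixed fundamental domain replaces $\xi_{n}$ by $g_{n}\xi_{n}$, which no longer converges to $\xi$, so cocompactness does not rescue the limit; and the claim that each witnessing pair of rays encloses a conical point of angle $<2\pi$ (needed to keep the two limiting rays from collapsing onto one another) is itself a nontrivial Gauss--Bonnet-type assertion that you do not prove. The paper avoids all of this by arguing by contradiction: assuming the ray from \emph{every} base point to $\xi$ is unique, it takes a small circle $\partial D\left(  \widetilde{s}_{0},\varepsilon \right)$ around a cone point $\widetilde{s}_{0}$ of angle $<2\pi$, removes the single point $\widetilde{x}_{0}$ where the ray $r_{\widetilde{s}_{0}}$ exits the disk, and assigns to each remaining base point $x$ the side ($+$ or $-$) of $r_{\widetilde{s}_{0}}$ on which the unique ray $r_{x}$ passes --- well defined because $r_{x}$ cannot cross $r_{\widetilde{s}_{0}}$ transversely (their intersection must be a common sub-ray, by uniqueness) and cannot pass through $\widetilde{s}_{0}$. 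Uniqueness also makes this side-map continuous (any limit of the $r_{x_{n}}$ is a ray to $\xi$, hence equals $r_{x}$), and approaching $\widetilde{x}_{0}$ from either side shows the map is onto $\left \{  +,-\right \}$, contradicting connectedness of the punctured circle. If you want to salvage your approach, this reformulation --- varying the base point around the cone point rather than the direction at a fixed base point --- is the missing idea.
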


\begin{proof}
Assume, on the contrary, that for every point $x\in \widetilde{S}$
there exists exactly one geodesic ray, denoted by $r_{x},$ with $r_{x}\left(
0\right)  =x$ and $r_{x}\left(  +\infty \right)  =\xi.$ Observe that for two
arbitrary distinct geodesic rays $r_{1},r_{2}$ with $r_{1}\left(
+\infty \right)  =r_{2}\left(  +\infty \right)  =\xi$ (then, by assumption,
$r_{1}\left(  0\right)  ,r_{2}\left(  0\right)  $ must be distinct) we have
that
\begin{equation}
\operatorname{Im}r_{1}\cap \operatorname{Im}r_{2}%
\mathrm{\mathrm{\ is\ either\ }a\mathrm{\ geodesic\ sub-ray~of\mathrm{~both~}%
or,\ }}\emptyset. \label{ray_intersection}%
\end{equation}
otherwise for a base point in the intersection we would have two distinct
geodesic rays corresponding to $\xi.$

Fix a point $\widetilde{s}_{0}$ where $s_{0}=p\left(  \widetilde{s}%
_{0}\right)  $ is a conical point with angle $\theta \left(  s_{0}\right)
<2\pi.$ Let $D\left(  \widetilde{s}_{0},\varepsilon \right)  $ be a closed disk
of radius $\varepsilon>0$ not containing any conical point except
$\widetilde{s}_{0}.$ The geodesic ray $r_{\widetilde{s}_{0}}$ intersects $\partial
\overline{D}\left(  \widetilde{s}_{0},\varepsilon \right)  $ at a single point
denoted $\widetilde{x}_{0}.$ We will reach a contradiction by defining a
continuous surjective map from $\partial D\left(  \widetilde{s}_{0}%
,\varepsilon \right)  \setminus \left \{  \widetilde{x}_{0}\right \}  $ to a space
$\left \{  +,-\right \}  $ consisting of two points.

Let $A$ be a positive number, see property (\ref{asymptoric_constant}) above,
such that for any $x\in D\left(  \widetilde{s}_{0},\varepsilon \right)  $ the
(unique) geodesic ray $r_{x}$ satisfies
\[
\forall t\in \left[  0,+\infty \right)  ,d\left(  r_{\widetilde{s}_{0}} \left(  t\right)
,r_{x}\left(  t\right)  \right)  <A.
\]
Observe that we may adjust $\varepsilon$ so that $\varepsilon<A.$  

Let $D\left(  r_{\widetilde{s}_{0}}\left(  3A\right)  ,A\right)  $ be the
closed disk of radius $A$ centered at $r_{\widetilde{s}_{0}}\left(  3A\right)
.$ Then, the set
\[
D\left(  r_{\widetilde{s}_{0}}\left(  3A\right)  ,A\right)  \setminus
\operatorname{Im}r_{\widetilde{s}_{0}}%
\]
consists of two connected components. Using the orientation
of $r_{\widetilde{s}_{0}}$ we may mark these components by saying that the
component to the right is the positive component and the one to the left the
negative, notation $D^{+}$ and $D^{-}$ respectively. In order to define a map
\[
R:\partial D\left(  \widetilde{s}_{0},\varepsilon \right)  \setminus \left \{
\widetilde{x}_{0}\right \}  \rightarrow \left \{  +,-\right \}
\]
we will distinguish 3 cases for each point $x\in \partial D\left(
\widetilde{s}_{0},\varepsilon \right)  \setminus \left \{  \widetilde{x}%
_{0}\right \}  :$\newline \underline{Case I:} $\operatorname{Im}r_{x}%
\cap \operatorname{Im}r_{\widetilde{s}_{0}}=\emptyset$\newline \underline{Case
II:} $\operatorname{Im}r_{x}\cap \operatorname{Im}r_{\widetilde{s}_{0}}%
\neq \emptyset$  and the unique time  $t_{x}\in [0,+\infty )$ so that \newline
$\operatorname{Im}%
r_{\widetilde{s}_{0}}|_{\left[  t_{x},+\infty \right]  }\subset
\operatorname{Im}r_{x},$
which exists by  (\ref{ray_intersection}), satisfies
$t_{x}>2A.$\newline \underline{Case III:} as in Case II with $t_{x}\leq2A .$

Observe that in Case III  $\operatorname{Im}r_{x}$ intersects neither $D^{+}$
nor $D^{-}.$ Clearly, in Cases I and II $\operatorname{Im}r_{x}$ intersects
at least one component $D^{+},$ $D^{-}.$  We claim that, in Cases I and II,
$\operatorname{Im}r_{x}$ cannot intersect both components $D^{+}$ and $D^{-}.$
To see this, let $\sigma:\left[  a,b\right]  \rightarrow \widetilde{S}$ be a
curve with the properties $\sigma \left(  a\right)  \in D^{+},\sigma \left(
b\right)  \in D^{-}$ and $\operatorname{Im}\sigma \cap \operatorname{Im}%
r_{\widetilde{s}_{0}}=\emptyset.$ By standard triangle inequality arguments it
follows that $lenght\left(  \sigma \right)  >2A.$ If  $\operatorname{Im}r_{x}$
intersected both $D^{+},D^{-}$ then, being a geodesic, it must intersect
$r_{\widetilde{s}_{0}}$ transversely, a contradiction according to the
assumptions in Case I and II. Thus, it follows (in Case I and II) that  either
$\operatorname{Im}r_{x}\cap$ $D^{+}\neq \emptyset$ or $\operatorname{Im}%
r_{x}\cap$ $D^{-}\neq \emptyset$ but not both. For $x\in \partial D\left(
\widetilde{s}_{0},\varepsilon \right)  \setminus \left \{  \widetilde{x}%
_{0}\right \}  $ whose geodesic ray $r_{x}$ falls into Case I or II we may now define

$R\left(  x\right)  :=+$ if $\operatorname{Im}r_{x}\cap$ $D^{+}%
\neq \emptyset$ and

$R\left(  x\right)  :=-$ if $\operatorname{Im}r_{x}\cap$ $D^{-}%
\neq \emptyset.$\newline Let now  $x\in \partial D\left(  \widetilde{s}%
_{0},\varepsilon \right)  \setminus \left \{  \widetilde{x}_{0}\right \}  $ so
that $r_{x}$ falls in to Case III. It is easy to see that $t_{x}>\varepsilon.$
For, if $0<t_{x}\leq \varepsilon$ then $r_{\widetilde{s}_{0}}\left(
t_{x}\right)  \in D\left(  \widetilde{s}_{0},\varepsilon \right)  $ which is
impossible because $r_{\widetilde{s}_{0}}\left(  t_{x}\right)  $ is a conical
point and $\varepsilon$ is chosen so that $\widetilde{s}_{0}$ is the unique
conical point in $D\left(  \widetilde{s}_{0},\varepsilon \right)  .$ If
$t_{x}=0,$ then the conical point $\widetilde{s}_{0}=r_{\widetilde{s}_{0}%
}\left(  0\right)  $ of angle $<2\pi$ lies on the geodesic ray $r_{x}$, a
contradiction by Lemma \ref{positive_distance}. Thus, $t_{x}>\varepsilon$ and
there exists $\delta>0$ sufficiently small so that  the disk $D\left(
r_{\widetilde{s}_{0}}\left(  t_{x}\right)  ,\delta \right)  $ does not contain
$\widetilde{x}_{0}=r_{\widetilde{s}_{0}}\left(  \varepsilon \right)  .$ This
disk $D\left(  r_{\widetilde{s}_{0}}\left(  t_{x}\right)  ,\delta \right)  $
can be used to define $R\left(  x\right)  $ as above: $\operatorname{Im}r_{x}$
intersects exactly one of the two oriented components of  $D\left(
r_{\widetilde{s}_{0}}\left(  t_{x}\right)  ,\delta \right)  \setminus
\operatorname{Im}r_{\widetilde{s}_{0}}$ and define $R\left(  x\right)  $ accordingly.

We show that $R$ is continuous. Let $\left \{  x_{n}\right \}  $ be a sequence
in $\partial D\left(  \widetilde{s}_{0},\varepsilon \right)  \setminus \left \{
\widetilde{x}_{0}\right \}  $ converging to a point $x.$ The sequence of
geodesic rays $\left \{  r_{x_{n}}\right \}  $ converges, up to a subsequence,
to a geodesic ray $q_{x}$ emanating from $x.$ Since $r_{x_{n}}\left(
+\infty \right)  =\xi$ for all $n,$ it follows that $q_{x}\left(
+\infty \right)  =\xi.$ By assumption of uniqueness of geodesic rays we have
$q_{x}=r_{x}.$ Thus, $r_{x_{n}}\rightarrow r_{x}$ uniformly on compact sets.
Without loss of generality we may  assume that $R\left(  x\right)  =+.$
\newline First assume that the geodesic ray $r_{x}$ falls into Case I or II,
that is, $\operatorname{Im}r_{x}\cap$ $D^{+}\neq \emptyset.$  Since $r_{x_{n}%
}\rightarrow r_{x}$ uniformly on compact sets it follows that there exists $N$
so that
\[
\forall n\geq N,\operatorname{Im}r_{x_{n}}\cap D^{+}\neq \emptyset
\]
which means that $R\left(  x_{n}\right)  =+,$ $\forall n\geq N.$ Case II is
treated similarly. This shows that $R$ is continuous. 

We show that $R$ is onto. We may choose a sequence $\left \{  x_{n}\right \}  $
$\subset \partial D\left(  \widetilde{s}_{0},\varepsilon \right)  \setminus
\left \{  \widetilde{x}_{0}\right \}  $ converging to $\widetilde{x}_{0}$ from
the right in the following sense: for all sufficiently small $\delta>0$ the
set $D\left(  \widetilde{x}_{0},\delta \right)  \setminus \operatorname{Im}%
r_{\widetilde{s}_{0}}$ consists of two connected components. We mark them as
right (positive) and left (negative) according to the positive direction of
$r_{\widetilde{s}_{0}}.$ We say that a sequence $\left \{  x_{n}\right \}  $
$\subset \partial D\left(  \widetilde{s}_{0},\varepsilon \right)  \setminus
\left \{  \widetilde{x}_{0}\right \}  $ converges to $\widetilde{x}_{0}$ from
the right if $x_{n}$ belongs to the right (positive) component of  $D\left(
\widetilde{x}_{0},\delta \right)  \setminus \operatorname{Im}r_{\widetilde
{s}_{0}}$ for all but finitely any $n.$ Clearly, for such a sequence 
$\left \{  x_{n}\right \}  ,$ the corresponding geodesic rays $r_{x_{n}}\rightarrow
r_{\widetilde{s}_{0}}|_{\left[  \varepsilon,+\infty \right)  }$ uniformly on
compact sets. Choose $\varepsilon_{1}<\varepsilon$ and set
\[
\mathcal{N}\left(  \varepsilon_{1}\right)  =\left \{  y\in \widetilde
{S}\bigm \vert \exists t\in \left[  \varepsilon,+\infty \right)  :d\left(
y,r_{\widetilde{s}_{0}}\left(  t\right)  \right)  <\varepsilon_{1}\right \}  .
\]
As above, $\mathcal{N}\left(  \varepsilon_{1}\right)  \setminus
\operatorname{Im}r_{\widetilde{s}_{0}}$ consists of two components
$\mathcal{N}^{+}$and $\mathcal{N}^{-}.$ Pick a sequence $x_{n}\rightarrow \widetilde
{x}_{0}$ from the right. Then, for all $n$ large enough, $x_{n}\in \mathcal{N}^{+}$
and by the uniform convergence of $r_{x_{n}},$ $\operatorname{Im}r_{x_{n}%
}\subset$ $\mathcal{N}^{+}.$ This implies that $R\left(  x_{n}\right)  =+$ for
all large enough $n.$ Similarly we show that $R$ attains the value $-.$
\end{proof}

\section{Density of closed geodesics}

We begin by showing that each class of geodesic rays (resp. lines) with the
same boundary point at infinity contains a leftmost and a rightmost geodesic
ray (resp. line) which bound a convex set containing the image of any other
geodesic ray (resp. line) in the same class.

\begin{proposition}
\label{left_right_rays}For every point $\xi \in \partial \widetilde{S},$ there
exist two geodesic rays $r_{L},r_{R}$ with $r_{L}\left(  \infty \right)
=\xi=r_{R}\left(  \infty \right)  $ and whose images bound a convex subset
$\widetilde{S}\left(  \xi \right)  $ of $\widetilde{S}$ with the property%
\[
\operatorname{Im}r\subset \widetilde{S}\left(  \xi \right)
\mathrm{\ for\ all\ geodesic\ rays\ }r\mathrm{\ with\ }r\left(  \infty \right)
=\xi.
\]

\end{proposition}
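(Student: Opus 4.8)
The plan is to obtain $\widetilde S(\xi)$ as the region enclosed by the two extreme rays in the asymptotic class of $\xi$, constructing these extreme rays as one-sided limits and then proving convexity and containment together by a single ``pushing'' argument built on Lemma \ref{dyoBig}.

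First I would record the coarse geometry of the class. By (\ref{asymptoric_constant}) and the stability of geodesics in the Gromov hyperbolic space $\widetilde S$, any two rays with positive endpoint $\xi$ are asymptotic, so there is a constant $A'$ and a fixed reference ray $r_0$ with $r_0(\infty)=\xi$ and $\operatorname{Im}r\subset N_{A'}(\operatorname{Im}r_0)$ for every $r$ with $r(\infty)=\xi$; thus the whole class lives in a bounded, ``thin'' neighbourhood of $\operatorname{Im}r_0$ funnelling to $\xi$. Fixing the orientation of $\partial\widetilde S\cong\mathbb{S}^1$ orients the plane $\widetilde S$, so I may speak of a point or a ray lying to the \emph{left} or to the \emph{right} of an oriented ray to $\xi$. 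Finally, Lemma \ref{dyoBig} tells me that two rays to $\xi$ meet only along sub-segments whose endpoints are conical points of angle $>2\pi$; in particular they never cross transversally at a regular point, and where they separate they bound bigons with such conical corners, exactly as for $\widetilde\sigma,\widetilde\tau$ in Example \ref{paradeigma}.

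Next I would construct the rightmost ray. Fixing a regular base point $\widetilde x_0$, choose boundary points $\xi_n\searrow\xi$ approaching $\xi$ from the right and rays $r_{\xi_n}$ from $\widetilde x_0$ to $\xi_n$. Since $\widetilde x_0$ is common and the endpoints are distinct, the orientation together with Lemma \ref{dyoBig} forces these rays to be nested consistently with the cyclic order on $\mathbb{S}^1$ (a transversal crossing would reverse left/right near the two ends and contradict the order of $\xi,\xi_n$ on the circle); hence the $r_{\xi_n}$ decrease monotonically to the right as $\xi_n\searrow\xi$. As all $r_{\xi_n}$ start at $\widetilde x_0$ and are unit speed, properness of $\widetilde S$ and Arzel\`a--Ascoli give a uniform limit $r_R$ on compact sets, a geodesic ray from $\widetilde x_0$; continuity of the endpoint map for the visual topology and $\xi_n\to\xi$ give $r_R(\infty)=\xi$. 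By the nesting, every ray from $\widetilde x_0$ to $\xi$ lies weakly to the left of $r_R$, so $r_R$ is rightmost. Symmetrically I obtain the leftmost ray $r_L$, and I let $\widetilde S(\xi)$ be the closed region between them, bounded by $\operatorname{Im}r_L\cup\operatorname{Im}r_R$.

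It remains to prove containment for rays based anywhere and convexity, and these I would treat by the same device. Suppose a geodesic segment $[p,q]$ (or a ray $r$ to $\xi$ based at some $y\neq\widetilde x_0$) with endpoints in $\widetilde S(\xi)$ left the region through the right boundary $\operatorname{Im}r_R$; then it would meet $\operatorname{Im}r_R$ in two isolated points $a,b$ and bulge to the right between them, and by Lemma \ref{dyoBig} both $a,b$ would be conical points of angle $>2\pi$. The plan is to replace $r_R|_{[a,b]}$ by this rightward bulge and to verify that the modified path is still a geodesic ray to $\xi$ lying strictly to the right of $r_R$, contradicting its rightmost property (the thinness of the first step guarantees the modified ray still tends to $\xi$); the mirror argument with $r_L$ rules out a left exit. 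I expect the main obstacle to be exactly this verification: checking, from the splitting of each angle $>2\pi$ at $a$ and $b$ into two parts $\geq\pi$, that the two corner angles of the modified path are themselves $\geq\pi$, so that the modification is a genuine local, hence global, geodesic. This is the one place where the positive curvature concentrated at the angle-$>2\pi$ conical points is used, and once it is settled both convexity of $\widetilde S(\xi)$ and containment of every ray with endpoint $\xi$ follow simultaneously.
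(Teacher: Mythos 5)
Your overall architecture --- two extremal rays bounding $\widetilde S(\xi)$, with convexity and containment obtained by replacing an arc of the boundary ray by a rightward bulge --- is also the paper's, and the step you single out as the main obstacle is in fact automatic: if a geodesic meets $r_R$ at isolated points $a,b$ and bulges to the right between them, both arcs from $a$ to $b$ are global geodesics of length $d(a,b)$, so the concatenation of $r_R|_{[0,a]}$, the bulge, and $r_R|_{[b,\infty)}$ realizes distances and is a geodesic; this is exactly the computation in the proof of Lemma \ref{dyoBig}. The genuine gap is earlier, in the construction of $r_R$. You obtain it as a limit of rays $r_{\xi_n}$ to points $\xi_n\to\xi$ approaching from the right, and you derive its extremality from the claim that rays from $\widetilde x_0$ to distinct boundary points are nested because a transversal crossing would contradict the cyclic order at infinity. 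That claim fails in this setting: Lemma \ref{dyoBig} does not forbid transversal crossings, it only locates them at conical points of angle $>2\pi$, and an \emph{even} number of such crossings --- a chain of bigons, precisely the configuration of $\widetilde\sigma$ and $\widetilde\tau$ in Example \ref{paradeigma} --- is compatible with any cyclic order of the endpoints at infinity; your parity argument only excludes an odd number. Consequently nothing controls which ray in the (possibly large) class of $\xi_n$ you pick: in the ladder bounded by the two lifts, a ray to $\xi_n$ may track the \emph{left} wall for an arbitrarily long time before branching rightwards at a conical point, so the limit of the $r_{\xi_n}$ can be the leftmost ray to $\xi$, and a ray to $\xi$ can bulge strictly to the right of every $r_{\xi_n}$. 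Your final pushing argument then produces a ray lying to the right of $r_R$, but there is no longer any extremal property of $r_R$ for this to contradict.

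The repair is to build the extremal ray from inside the class itself, which is what the paper does: for each $N$ one shows that $\xi(N)=\{r(N)\mid r\in\xi\}$ is a closed subset of an arc of diameter $A$ on the circle of radius $N$ about $\widetilde x_0$, takes its rightmost point $y_R$, joins $\widetilde x_0$ to $y_R$ by a geodesic segment, and corrects this segment at the finitely many conical points of $D(\widetilde x_0,N)$ where rays of the class cross it (again via Lemma \ref{dyoBig}) so that it bounds every $r|_{[0,N]}$, $r\in\xi$, on one side; letting $N\to\infty$ yields $r_R$ already equipped with the extremal property that your limit-and-push machinery needs.
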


\begin{proof}
Observe that if the class of geodesic rays corresponding to $\xi$ is a
singleton, the above Proposition holds trivially with $r_{L}=r_{R}$ being the
unique geodesic ray with positive point $\xi.$

Let $A$ be the number posited in equation (\ref{asymptoric_constant}). Denote
by $C\left(  \widetilde{x}_{0},m\right)  $ (resp. $D\left(  \widetilde{x}%
_{0},m\right)  )$ the circle (resp. closed disc) of radius $m$ centered at
$\widetilde{x}_{0}.$ For each large enough $N\in \mathbb{N}$, the set%
\[
\xi \left(  N\right)  =\left \{  r\left(  N\right)  \bigm \vert r\in \xi \right \}
\]
is contained in an interval $I_{N,\xi}$ of diameter $A$ inside the circle
$C\left(  \widetilde{x}_{0},N\right)  .$ For large enough $N$ we may orient
$I_{N,\xi}$ and speak of its left and right endpoint.

We claim that $\xi \left(  N\right)  $ is a closed set. To see this let
$\left \{  y_{n}\right \}  $ be a sequence of points in $\xi \left(  N\right)  $
converging to $y\in I_{N,\xi}.$ By definition of $\xi \left(  N\right)  ,$ for
each $y_{n}$ there exists a geodesic ray $r_{n}\in \xi$ (not necessarily
unique) such that $r_{n}\left(  N\right)  =y_{n}.$ By passing to a
subsequence, if necessary, $\left \{  r_{n}\right \}  $ converges to a geodesic
ray $r$ and, clearly, $r\in \xi.$ As $y_{n}\rightarrow y,$ $y$ must belong to
$\operatorname{Im}r$ and, on the other hand, $y\in I_{N,\xi}\subset$ $C\left(
\widetilde{x}_{0},N\right)  .$ Thus, $y=r\left(  N\right)  $ which shows that
$\xi \left(  N\right)  $ is closed.

By compactness, the leftmost and rightmost points of $\xi \left(  N\right)  $
inside $I_{N,\xi},$ denoted by $y_{L}$ and $y_{R}$ respectively, exist. As the
number of conical points in $D\left(  \widetilde{x}_{0},N\right)  $ is finite,
we may choose (cf Lemma \ref{dyoBig}) geodesic segments $\sigma_{L,N}$ and
$\sigma_{R,N}$ with endpoints $\widetilde{x}_{0},y_{L}$ and $\widetilde{x}%
_{0},y_{R}$ respectively, satisfying the following property:

\begin{itemize}
\item the convex subset of $D\left(  \widetilde{x}_{0},N\right)  $ bounded by
the union%
\begin{equation}
\sigma_{L,N}\cup \left[  y_{L},y_{R}\right]  \cup \sigma_{R,N}
\label{chain_increasin0}%
\end{equation}
where $\left[  y_{L},y_{R}\right]  $ indicates the subinterval of $C\left(
\widetilde{x}_{0},N\right)  $ containing $\xi \left(  N\right)  ,$ contains all
geodesic segments $r|_{\left[  0,N\right]  }$ for all $r\in \xi.$
\end{itemize}

The segment $\sigma_{L,N}$ (and similarly for $\sigma_{R,N})$ can be obtained
by starting with a geodesic segment $\sigma_{L,N}^{\prime}$ with endpoints
$\widetilde{x}_{0},y_{L}$ and then if a geodesic ray intersects the segment
$\sigma_{L,N}^{\prime}$, it must do so at pairs of (conical) points
(otherwise, the property of $y_{L}$ being leftmost would be violated). As the
intersection points are conical points, they are finitely many pairs of
intersection points so we may replace (see Lemma \ref{dyoBig}) finitely many
parts of the segment $\sigma_{L,N}^{\prime}$ to obtain $\sigma_{L,N}.$

The sequences $\left \{  \sigma_{L,N}\right \}  _{N\in \mathbb{N}},$ $\left \{
\sigma_{R,N}\right \}  _{N\in \mathbb{N}}$ converge to geodesic rays
$r_{L},r_{R}\in \xi$ respectively. The required property in the statement of
the Proposition for the convex set $\widetilde{S}\left(  \xi \right)  $ bounded
by $\operatorname{Im}r_{L},$ $\operatorname{Im}r_{R},$ now follows: for, if
$r\in \xi$ with $\operatorname{Im}r\nsubseteqq \widetilde{S}\left(  \xi \right)
$ then, for some $M>0,$ $r\left(  M\right)  \notin$ $\widetilde{S}\left(
\xi \right)  .$ Assume that the distance $d\left(  r\left(  M\right)
,\widetilde{S}\left(  \xi \right)  \right)  =C_{0}>0$ of $r\left(  M\right)  $
from $\widetilde{S}\left(  \xi \right)  $ is realized by a point on
$\operatorname{Im}r_{R}.$ Then, for a compact set $K\supset \left[  0,M\right]
$ and the positive number $C_{0}/2$ there exist $N_{0}$ so that
\[
d\left(  r_{R}\left(  t\right)  ,\sigma_{R,N}\left(  t\right)  \right)
<C_{0}/2\mathrm{\ for\ all\ }t\in K\mathrm{\ and\ for\ all\ }N>N_{0}.
\]
We may assume that $N_0$ satisfies $N_0 > [M]+1 .$ It follows that $r\left(  M\right)  $ 
does not belong to the convex subset of
$D\left(  \widetilde{x}_{0},N_0\right)  $ bounded by the union
\[
\sigma_{L,N_0}\cup \left[  y_{L},y_{R}\right]  \cup \sigma_{R,N_0}%
\]
contradicting (\ref{chain_increasin0}).
\end{proof}

In view of the above Proposition we introduce the following

\noindent \textbf{Terminology: }For each $\xi \in \partial \widetilde{S},$ the
geodesic rays posited in the above proposition will be called leftmost and
rightmost geodesic rays in the class of $\xi$ and will be denoted by
$r_{L,\xi}$ and $r_{R,\xi}$ respectively.

\begin{proposition}
\label{left_right_lines}For every pair of points $\eta,\xi \in \partial
\widetilde{S}$ with $\eta \neq \xi,$ there exist two geodesic lines $\gamma
_{L},\gamma_{R}\in \left(  \eta,\xi \right)  ,$ that is, $\gamma_{L}\left(
-\infty \right)  =\eta=\gamma_{R}\left(  -\infty \right)  $ and $\gamma
_{L}\left(  \infty \right)  =\xi=\gamma_{R}\left(  \infty \right)  ,$ whose
images bound a convex subset $\widetilde{S}\left(  \eta,\xi \right)  $ of
$\widetilde{S}$ with the property%
\[
\operatorname{Im}\gamma \subset \widetilde{S}\left(  \eta,\xi \right)
\mathrm{\ for\ all\ geodesic\ lines\ }\gamma \in \left(  \eta,\xi \right)  .
\]

\end{proposition}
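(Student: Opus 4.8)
The plan is to mirror the proof of Proposition \ref{left_right_rays}, replacing the exhaustion of a class of rays emanating from a fixed point by an exhaustion, over concentric circles, of the bi-infinite family of lines in $\left(\eta,\xi\right)$. Fix a base point $\widetilde{x}_0\in\widetilde{S}$ and let $A$ be the constant of (\ref{asymptoric_constant}), so that by the stability property for lines quoted there any two lines in $\left(\eta,\xi\right)$ lie within Hausdorff distance $A$ of one another. If the class $\left(\eta,\xi\right)$ is a singleton the statement is trivial with $\gamma_L=\gamma_R$, so assume otherwise. Orienting every $\gamma\in\left(\eta,\xi\right)$ from $\eta$ to $\xi$, each such $\gamma$ is a properly embedded line in $\widetilde{S}\cong\mathbb{R}^2$ and hence separates $\widetilde{S}$ into a left and a right half-plane; by Lemma \ref{dyoBig} two distinct lines of $\left(\eta,\xi\right)$ meet only in a (possibly empty) union of shared sub-segments and isolated conical points of angle $>2\pi$, so they never cross transversally at a regular point. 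This yields a consistent left/right comparison of the members of $\left(\eta,\xi\right)$, which is the structure that lets us speak of a leftmost and a rightmost line.

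Next I would carry out the finite-stage construction. For each large $N$ a line $\gamma\in\left(\eta,\xi\right)$ exits the disc $D\left(\widetilde{x}_0,N\right)$ in exactly one point on the $\xi$-side and one on the $\eta$-side of $C\left(\widetilde{x}_0,N\right)$; by the Hausdorff bound these two sets of exit points, call them $\xi\left(N\right)$ and $\eta\left(N\right)$, are confined to sub-arcs of $C\left(\widetilde{x}_0,N\right)$ of diameter comparable to $A$. Exactly as in Proposition \ref{left_right_rays} one checks that $\xi\left(N\right)$ and $\eta\left(N\right)$ are closed: a convergent sequence of exit points comes from lines $\gamma_n\in\left(\eta,\xi\right)$ which, after passing to a subsequence, converge uniformly on compact sets to a line $\gamma$ whose endpoints are still $\eta$ and $\xi$, forcing the limit exit point to be realized by $\gamma$. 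Using the orientation I then select the rightmost exit points $y_R^{\eta}\in\eta\left(N\right)$ and $y_R^{\xi}\in\xi\left(N\right)$ and, invoking Lemma \ref{dyoBig} to absorb the finitely many high-angle conical intersection points exactly as in the construction of $\sigma_{L,N}$ there, build a geodesic segment $\sigma_{R,N}$ from $y_R^{\eta}$ to $y_R^{\xi}$ so that the region of $D\left(\widetilde{x}_0,N\right)$ bounded by $\sigma_{R,N}$, the arc through $\xi\left(N\right)$, the symmetrically built leftmost segment $\sigma_{L,N}$, and the arc through $\eta\left(N\right)$ contains $\gamma|_{\left[-N,N\right]}$ for every $\gamma\in\left(\eta,\xi\right)$.

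Finally, the sequences $\left\{\sigma_{L,N}\right\}$ and $\left\{\sigma_{R,N}\right\}$ converge, uniformly on compact sets, to geodesic lines $\gamma_L,\gamma_R\in\left(\eta,\xi\right)$, and I would let $\widetilde{S}\left(\eta,\xi\right)$ be the convex region they bound. The containment $\operatorname{Im}\gamma\subset\widetilde{S}\left(\eta,\xi\right)$ is verified by the same contradiction argument as in Proposition \ref{left_right_rays}: were some $\gamma\in\left(\eta,\xi\right)$ to leave $\widetilde{S}\left(\eta,\xi\right)$, say with $d\left(\gamma\left(M\right),\widetilde{S}\left(\eta,\xi\right)\right)=C_0>0$ realized on $\operatorname{Im}\gamma_R$, then choosing $N$ large enough that $\sigma_{R,N}$ is within $C_0/2$ of $\gamma_R$ on a compact set containing $\gamma|_{\left[-M,M\right]}$ would place $\gamma\left(M\right)$ outside the bounding region at stage $N$, contradicting the finite-stage confinement.

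I expect the main obstacle to be the consistency of the left/right labelling at the two ends simultaneously, that is, ensuring that the rightmost exit point on the $\xi$-side and the rightmost exit point on the $\eta$-side are boundaries of one and the same confining region, so that $\sigma_{R,N}$ really bounds the family from the right at both ends. This is where the planarity of $\widetilde{S}$ and the non-transversal-intersection property from Lemma \ref{dyoBig} do the essential work; the remaining analytic points (closedness of the exit sets, the conical-point adjustments, and the limiting argument) transcribe directly from Proposition \ref{left_right_rays}.
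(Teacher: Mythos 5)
Your overall architecture is the same as the paper's: exhaust by discs $D\left(\widetilde{x}_0,N\right)$, confine the crossing points of all lines in $\left(\eta,\xi\right)$ to two arcs of diameter comparable to $A$, show these sets of crossing points are closed, select leftmost and rightmost representatives, build bounding geodesic segments $\sigma_{L,N},\sigma_{R,N}$ by absorbing the conical intersection points via Lemma \ref{dyoBig}, and pass to the limit. The final containment argument you give is also the one the paper uses.

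There is, however, one genuine gap at the step where you assert that each $\gamma\in\left(\eta,\xi\right)$ ``exits the disc $D\left(\widetilde{x}_0,N\right)$ in exactly one point on the $\xi$-side and one on the $\eta$-side.'' Since $\widetilde{S}$ has conical points of angle less than $2\pi$, it is not CAT(0), metric balls need not be convex, and a geodesic line can cross $C\left(\widetilde{x}_0,N\right)$ several times near each end; the paper states explicitly that $\operatorname{Im}\gamma\cap I_{N}^{+}\left(\eta,\xi\right)$ need not be a singleton. The repair is to single out the unique pair of times $t_{\gamma,N}^{-},t_{\gamma,N}^{+}$ with $\gamma\left(t_{\gamma,N}^{\pm}\right)$ in the respective arcs and $\left\vert t_{\gamma,N}^{-}-t_{\gamma,N}^{+}\right\vert$ minimal, i.e.\ so that $\gamma$ restricted to the open interval between them misses the circle. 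This choice then infects your closedness argument: it is no longer enough to say the limit exit point lies on the limit line $\gamma^{\prime}$ (which is all that the transcription from Proposition \ref{left_right_rays} gives you, and there $y=r\left(N\right)$ was automatic from the parametrization); you must also show the limit point is the \emph{distinguished} point $\gamma^{\prime}\left(t_{\gamma^{\prime},N}^{+}\right)$ and not some other point of $\operatorname{Im}\gamma^{\prime}\cap I_{N}^{+}\left(\eta,\xi\right)$. The paper does this by observing that, for $N$ large enough that $\left\vert t_{\gamma_n,N}^{-}-t_{\gamma_n,N}^{+}\right\vert>2A$, the points $\gamma_{n}\left(t_{\gamma_{n},N}^{+}-\varepsilon\right)$ converge into the interior of the disc, which pins down the limit as the correct exit point. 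Your remaining concern about matching the left/right labels at the two ends is legitimate but is handled, as you suspect, by the fixed orientation from $\eta$ to $\xi$ together with the non-transversality of intersections from Lemma \ref{dyoBig}.
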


\begin{proof}
The line of proof is similar to the previous proposition, however, we include
it here because certain modifications are needed.

We may assume that there exist at least two geodesics in the class of $\left(
\eta,\xi \right)  ,$ otherwise the statement is trivial. Moreover, each
$\gamma \in \left(  \eta,\xi \right)  $ is considered oriented with positive the
direction from $\eta$ to $\xi$ and then the left and right component of
$\partial \widetilde{S}\setminus \left \{  \eta,\xi \right \}  $ is determined.
Pick a base point $\widetilde{x}_{0}$ on the image of an arbitrary $\gamma
_{0}\in \left(  \eta,\xi \right)  $ and set $\gamma_{0}\left(  0\right)
=\widetilde{x}_{0}.$ For large enough $N\in \mathbb{N},$ we may find intervals
\[%
\begin{array}
[c]{c}%
I_{N}^{+}\left(  \eta,\xi \right)  :=\left[  \gamma_{0}\left(  N\right)
-A,\gamma_{0}\left(  N\right)  +A\right]  \subset C\left(  \widetilde{x}%
_{0},N\right) \\
I_{N}^{-}\left(  \eta,\xi \right)  :=\left[  \gamma_{0}\left(  -N\right)
-A,\gamma_{0}\left(  -N\right)  +A\right]  \subset C\left(  \widetilde{x}%
_{0},N\right)
\end{array}
\]
where $A$ is the constant posited in (\ref{asymptoric_constant}), with the
property%
\[
\mathrm{for\ all\ }\gamma \in \left(  \eta,\xi \right)  ,\operatorname{Im}%
\gamma \cap C\left(  \widetilde{x}_{0},N\right)  \subset I_{N}^{+}\left(
\eta,\xi \right)  \cup I_{N}^{+}\left(  \eta,\xi \right)  .
\]
For each $\gamma$ in $\left(  \eta,\xi \right)  ,$ the intersection
$\operatorname{Im}\gamma \cap I_{N}^{+}\left(  \eta,\xi \right)  $ (resp.
$I_{N}^{-}\left(  \eta,\xi \right)  $) is not necessarily a singleton. However,
there exist unique numbers $t_{\gamma,N}^{-},t_{\gamma,N}^{+}\in \mathbb{R}$
such that
\[
\gamma \left(  t_{\gamma,N}^{-}\right)  \in I_{N}^{-}\left(  \eta,\xi \right)
,\gamma \left(  t_{\gamma,N}^{+}\right)  \in I_{N}^{+}\left(  \eta,\xi \right)
\]
and $\left \vert t_{\gamma,N}^{-}-t_{\gamma,N}^{+}\right \vert $ is minimal with
respect to the above inclusions. Equivalently, $\gamma|_{\left(  t_{\gamma
,N}^{-},t_{\gamma,N}^{+}\right)  }\cap C\left(  \widetilde{x}_{0},N\right)
=\emptyset.$ Set
\[
\xi^{+}\left(  N\right)  =\left \{  \gamma \left(  t_{\gamma,N}^{+}\right)
\bigm \vert \gamma \in \left(  \eta,\xi \right)  \right \}
\]
and similarly for $\xi^{-}\left(  N\right)  .$ We claim that both sets
$\xi^{+}\left(  N\right)  ,\xi^{-}\left(  N\right)  $ are closed. To see this
let $\left \{  y_{n}\right \}  $ be a sequence of points in $\xi^{+}\left(
N\right)  $ converging to $y\in I_{N}^{+}\left(  \eta,\xi \right)  .$ By
definition of $\xi^{+}\left(  N\right)  ,$ for each $y_{n}$ there exists a
geodesic $\gamma_{n}\in \left(  \eta,\xi \right)  $ (not necessarily unique)
such that $\gamma_{n}\left(  t_{\gamma_{n},N}^{+}\right)  =y_{n}.$ By passing
to a subsequence, if necessary, $\left \{  \gamma_{n}\right \}  $ converges to a
geodesic $\gamma^{\prime}.$ Clearly, $\gamma^{\prime}\in \left(  \eta
,\xi \right)  $ and $y$ must belong to $\operatorname{Im}\gamma^{\prime}\cap
I_{N}^{+}\left(  \eta,\xi \right)  .$ In order to complete the proof that
$\xi^{+}\left(  N\right)  $ is closed we need to show that $y=\gamma^{\prime
}\left(  t_{\gamma^{\prime},N}^{+}\right)  .$

We may assume that $N$ is large enough so that
\[
\left \vert t_{\gamma_{n},N}^{-}-t_{\gamma_{n},N}^{+}\right \vert >2A
\]
for all $n.$ Observe that for every $\varepsilon>0$ which belongs to the
interval $\left(  0,\left \vert t_{\gamma_{n},N}^{-}-t_{\gamma_{n},N}%
^{+}\right \vert \right)  $ for all $n,$ the sequence $\left \{  \gamma
_{n}\left(  t_{\gamma_{n},N}^{+}-\varepsilon \right)  \right \}  _{n\in
\mathbb{N}}$ converges to the interior of the disk $D\left(  \widetilde{x}%
_{0},N\right)  .$ Therefore, all points on $\operatorname{Im}\gamma^{\prime
}|_{\left(  \eta,y\right]  }$ of distance $\varepsilon<2A$ from $y$ belong to
the interior of the disk $D\left(  \widetilde{x}_{0},N\right)  .$ It follows
that $y=\gamma^{\prime}\left(  t_{\gamma^{\prime},N}^{+}\right)  $ which shows
that $\xi^{+}\left(  N\right)  $ is closed. Similarly we show that $\xi
^{-}\left(  N\right)  $ is closed.

Denote by $y_{L}^{+}$ (resp. $y_{R}^{+})$ be the leftmost (resp. rightmost)
point of $\xi^{+}\left(  N\right)  $ in $I_{N}^{+}\left(  \eta,\xi \right)  $
and $y_{L}^{-}$ (resp. $y_{R}^{-})$ be the leftmost (resp. rightmost) point of
$\xi^{-}\left(  N\right)  $ in $I_{N}^{-}\left(  \eta,\xi \right)  $ all for
which exist by compactness. We may construct a rightmost geodesic segment
$\sigma_{R,N}=\left[  y_{R}^{-},y_{R}^{+}\right]  $ in $D\left(  \widetilde
{x}_{0},N\right)  $ with endpoints $y_{R}^{-},y_{R}^{+}$ and a leftmost
geodesic segment $\sigma_{L,N}=\left[  y_{L}^{-},y_{L}^{+}\right]  $ in
$D\left(  \widetilde{x}_{0},N\right)  $ with endpoints $y_{L}^{-},y_{L}^{+}$
so that the following property holds:

\begin{itemize}
\item the convex subset of $D\left(  \widetilde{x}_{0},N\right)  $ bounded by
the union%
\[
\sigma_{L,N}\cup \left[  y_{L}^{+},y_{R}^{+}\right]  \cup \sigma_{R,N}%
\cup \left[  y_{R}^{-},y_{R}^{+}\right]
\]
where $\left[  y_{L}^{+},y_{R}^{+}\right]  $ (resp. $\left[  y_{L}^{-}%
,y_{R}^{-}\right]  $) indicates the subinterval of $C\left(  \widetilde{x}%
_{0},N\right)  $ containing $\xi^{+}\left(  N\right)  $ (resp. $\xi^{-}\left(
N\right)  $), contains all segments $\operatorname{Im}\gamma \cap D\left(
\widetilde{x}_{0},N\right)  $ for all $\gamma \in \left(  \eta,\xi \right)  .$
\end{itemize}

The segment $\sigma_{L,N}$ (and similarly for $\sigma_{R,N})$ can be obtained
by starting with a geodesic segment $\sigma_{L,N}^{\prime}$ with endpoints
$y_{L}^{-},y_{L}^{+}$ and then if a geodesic line intersects the segment
$\sigma_{L,N}^{\prime}$, it must do so at pairs of (conical) points
(otherwise, the property of $y_{L}^{+},y_{L}^{-}$ being leftmost would be
violated). As the intersection points are conical points, they are finitely
many so we may replace (see Lemma \ref{dyoBig}) finitely many parts of the
segment $\sigma_{L,N}^{\prime}$ to obtain $\sigma_{L,N}.$

As in the proof of the previous proposition we obtain the desired geodesic
lines as limits of the sequences $\left \{  \sigma_{L,N}\right \}
_{N\in \mathbb{N}}$ and $\left \{  \sigma_{R,N}\right \}  _{N\in \mathbb{N}}.$
\end{proof}

\noindent \textbf{Terminology: }For each \bigskip$\left(  \eta,\xi \right)
\in \partial^{2}\widetilde{S},$ the geodesic lines posited in the above
proposition will be called leftmost and rightmost geodesic lines in the class
of $\left(  \eta,\xi \right)  $ and will be denoted by $\gamma_{L,\left(
\eta,\xi \right)  }$ and $\gamma_{R,\left(  \eta,\xi \right)  }$ respectively.

\begin{theorem}
\label{mmain}Closed geodesics are dense in $GS$ in the following sense: for
each pair \bigskip$\left(  \eta,\xi \right)  \in \partial^{2}\widetilde{S}$
there exists a sequence of geodesics $\left \{  c_{n}\right \}  $ such that
$c_{n}\rightarrow \gamma_{L,\left(  \eta,\xi \right)  }$ in the usual uniform
sense on compact sets and $p(c_{n})$ is a closed geodesic in $S$ for all $n.$
Similarly for $\gamma_{R,\left(  \eta,\xi \right)  }.$
\end{theorem}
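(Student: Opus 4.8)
The goal is to approximate the leftmost geodesic line $\gamma_{L,(\eta,\xi)}$ by geodesics whose projections to $S$ are closed. Since closed geodesics in $S$ correspond exactly to axes of hyperbolic elements of $\pi_1(S)$, and by Proposition \ref{dense3} the set of pairs $(\phi(+\infty),\phi(-\infty))$ with $\phi \in \pi_1(S)$ is dense in $\partial^2\widetilde{S}$, the natural plan is to choose a sequence $\phi_n \in \pi_1(S)$ with $\phi_n(-\infty) \to \eta$ and $\phi_n(+\infty) \to \xi$, and to take $c_n$ to be a suitable geodesic axis of $\phi_n$. The projection $p(c_n)$ is then automatically a closed geodesic in $S$, so the entire difficulty lies in arranging the convergence $c_n \to \gamma_{L,(\eta,\xi)}$ rather than to some other line in the class $(\eta,\xi)$.

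\textbf{Setup and main steps.} The plan is as follows. First I would fix the leftmost line $\gamma := \gamma_{L,(\eta,\xi)}$ from Proposition \ref{left_right_lines} and a base point $\widetilde{x}_0 = \gamma(0)$. Using Proposition \ref{dense3}, pick $\phi_n \in \pi_1(S)$ with $(\phi_n(-\infty),\phi_n(+\infty)) \to (\eta,\xi)$ in $\partial^2\widetilde{S}$. Each $\phi_n$ is hyperbolic (as all elements of $\pi_1(S)$ are), so it has an invariant geodesic line, but since geodesics between two boundary points are not unique here, I must specify which axis to take. The key idea is to take $c_n$ to be the \emph{leftmost} geodesic line $\gamma_{L,(\phi_n(-\infty),\phi_n(+\infty))}$ joining the repulsive and attractive points of $\phi_n$; by the $\pi_1(S)$-invariance of the whole construction (the action extends to $\partial\widetilde{S}$ and preserves the left/right orientation up to the orientation-preserving nature of deck transformations), the leftmost line joining a fixed pair is carried by $\phi_n$ to the leftmost line joining the image pair, hence $\phi_n$ preserves $\operatorname{Im} c_n$. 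Therefore $p(c_n)$ is a closed geodesic. Then I would show that the sequence $\{c_n\}$, suitably reparametrized so that $c_n(0)$ is the point of $\operatorname{Im} c_n$ nearest $\widetilde{x}_0$, converges uniformly on compact sets to $\gamma$.

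\textbf{The convergence argument.} For the convergence I would proceed in two stages. By properness of $\widetilde{S}$ and stability of geodesics in the Gromov-hyperbolic setting, the reparametrized sequence $\{c_n\}$ subconverges (uniformly on compacta) to some geodesic line $c_\infty$. Since $\phi_n(\pm\infty) \to (\eta,\xi)$ and the endpoints map continuously under uniform-on-compacta convergence, $c_\infty \in (\eta,\xi)$. The main obstacle is then to rule out $c_\infty \neq \gamma_{L,(\eta,\xi)}$, i.e., to guarantee the limit is the leftmost line and not some interior or rightmost line. To control this I would exploit that each $c_n$ is itself leftmost in its own class: using the convex region $\widetilde{S}(\eta,\xi)$ and the separation argument of Proposition \ref{left_right_lines}, the ``leftmost'' property passes to the limit, because any line strictly to the right of $c_\infty$ in the class $(\eta,\xi)$ would, by the Claim (isolated double intersections force conical points of angle $>2\pi$, Lemma \ref{dyoBig}) and the convexity bookkeeping, contradict the leftmost property of the approximating $c_n$ for large $n$. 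This is where I expect the real work to lie: one must show the limit of leftmost axes is the leftmost line, which requires carefully tracking orientations through the $\phi_n$-action and invoking the uniqueness-failure structure codified in Lemma \ref{dyoBig} and Corollary \ref{Cor2Big}. Finally, since the original sequence was extracted by subconvergence, a standard argument shows that every subsequential limit equals $\gamma_{L,(\eta,\xi)}$, whence the full sequence converges. The statement for $\gamma_{R,(\eta,\xi)}$ follows by the symmetric choice of rightmost axes.
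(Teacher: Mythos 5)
Your starting point coincides with the paper's: use Proposition \ref{dense3} to pick hyperbolic $\phi_n$ with fixed points converging to $(\eta,\xi)$ and take for $c_n$ a $\phi_n$-invariant geodesic line, so that $p(c_n)$ is closed. But the step you yourself flag as ``where the real work lies'' is a genuine gap, and it conceals two concrete problems. First, you impose no condition on the side from which the pairs $(\phi_n(-\infty),\phi_n(+\infty))$ approach $(\eta,\xi)$. The paper requires both fixed points of every $\phi_n$ to lie in one chosen component of $\partial\widetilde{S}\setminus\{\eta,\xi\}$; without this your scheme can converge to the wrong line. Indeed, if each class $(\phi_n(-\infty),\phi_n(+\infty))$ happens to be a singleton --- perfectly compatible with Proposition \ref{dense3} --- then ``leftmost axis'' carries no information at all (the unique axis is simultaneously leftmost and rightmost), and if the fixed points approach from the component adjacent to $\gamma_{R,(\eta,\xi)}$ the axes will accumulate on $\gamma_{R,(\eta,\xi)}$ rather than $\gamma_{L,(\eta,\xi)}$. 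Second, even after fixing the side, the assertion that ``the leftmost property passes to the limit'' is precisely the unproved content: what must be shown is that $\operatorname{Im}c_n$ does not penetrate the interior of the convex set $\widetilde{S}(\eta,\xi)$ of Proposition \ref{left_right_lines}, since otherwise a subsequential limit can be an interior line of the class. Being leftmost in its own class does not by itself prevent $c_n$ from dipping across $\gamma_{L,(\eta,\xi)}$ through conical points of angle $>2\pi$.

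The paper resolves both issues by construction rather than by a limiting argument about leftmost axes. It builds $c_n''$ as a lift of the length-minimizing closed curve freely homotopic to $p([y,\phi_n(y)])$; the one-sidedness condition guarantees that $\operatorname{Im}c_n''\cap\operatorname{Im}\gamma_{R,(\eta,\xi)}$ has finitely many components, all contained in a single (enlarged) period; and it then performs surgery, replacing each sub-arc of $c_n''$ lying in $\widetilde{S}(\eta,\xi)$ between two crossing points (conical points of angle $>2\pi$, by Lemma \ref{dyoBig}) by the corresponding arc of the outermost line. The result is still a closed geodesic, now meeting $\widetilde{S}(\eta,\xi)$ only along its boundary, and convergence to the outermost line is then immediate. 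If you try to close your gap --- to show the leftmost axis cannot enter $\operatorname{int}\widetilde{S}(\eta,\xi)$ --- you will end up running this same splicing argument (splice in an arc of $\gamma_{L,(\eta,\xi)}$ to produce a competitor line in the class of $\phi_n$ lying strictly to the left of $c_n$ somewhere, contradicting leftmostness), so the surgery is not avoidable; the paper simply applies it directly to manufacture the approximating closed geodesics. Your equivariance claim (that $\phi_n$ carries the leftmost line of its own pair to itself) is plausible but also needs the orientation-preservation argument spelled out, whereas the paper's free-homotopy construction sidesteps it.
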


\begin{proof}
For arbitrary $\left(  \eta,\xi \right)  \in \partial^{2}\widetilde{S},$ we
orient as positive the direction from $\eta$ to $\xi$ and name left and right
the components of $\partial \widetilde{S}\setminus \left \{  \eta,\xi \right \}  .$
We may choose, by Proposition \ref{dense3}, a sequence $\left \{  \left(
\phi_{n}\left(  -\infty \right)  ,\phi_{n}\left(  +\infty \right)  \right)
\right \}  _{n\in \mathbb{N}}$ where each $\phi_{n}$ is a (hyperbolic) element
of $\pi_{1}\left(  S\right)  $ such that $\phi_{n}\left(  -\infty \right)
\rightarrow \eta,$ $\phi_{n}\left(  \infty \right)  \rightarrow \xi$ with the
additional property that for all $n$ both $\phi_{n}\left(  -\infty \right)  ,$
$\phi_{n}\left(  \infty \right)  $ belong to the same (say, right) component of
$\partial \widetilde{S}\setminus \left \{  \eta,\xi \right \}  .$ In particular we
have that $\phi_{n}\left(  -\infty \right)  \neq \eta$ and $\phi_{n}\left(
\infty \right)  \neq \xi.$

We claim that for each $n\in \mathbb{N},$ there exists a geodesic $c_{n}%
^{\prime \prime}\in \left(  \phi_{n}\left(  -\infty \right)  ,\phi_{n}\left(
+\infty \right)  \right)  ,$ that is, $c_{n}^{\prime \prime}\left(
-\infty \right)  =\phi_{n}\left(  -\infty \right)  $ and $c_{n}^{\prime \prime
}\left(  \infty \right)  =\phi_{n}\left(  +\infty \right)  $ whose projection to
$S\ $is closed. To see this, pick arbitrary $y\in \widetilde{S}$ and consider
the geodesic segment $\left[  y,\phi_{n}\left(  y\right)  \right]  $ which,
clearly, projects to a closed curve, say $\sigma_{n}$ in $S.$ There exists a
length minimizing closed curve in the (free) homotopy class of $\sigma_{n}$
(see \cite[Ch. 1, Remark 1.13(b)]{[GLP]}). By choosing an appropriate lift to $\widetilde{S}$ of this
length minimizing closed curve we obtain a geodesic line $c_{n}^{\prime \prime
}$ such that the set
\[
\left \{  \phi_{n}^{i}\left(  y\right)  \bigm \vert i\in \mathbb{Z}\right \}
\]
is at bounded distance from $\operatorname{Im}$ $c_{n}^{\prime \prime}.$ Thus,
$c_{n}^{\prime \prime}\left(  -\infty \right)  =\phi_{n}\left(  -\infty \right)
$ and $c_{n}^{\prime \prime}\left(  \infty \right)  =\phi_{n}\left(
+\infty \right)  $ as desired.

Since by construction the projection of $c_{n}^{\prime \prime}$ to $S$ is a
closed curve, we can speak of the period of $c_{n}^{\prime \prime}.$ Let
$\gamma_{R,\left(  \eta,\xi \right)  }\in \left(  \eta,\xi \right)  $ be the
rightmost geodesic posited in Proposition \ref{left_right_lines}. As
$c_{n}^{\prime \prime}\left(  -\infty \right)  \neq \eta$ and $c_{n}%
^{\prime \prime}\left(  \infty \right)  \neq \xi,$ the intersection
\[
\operatorname{Im}c_{n}^{\prime \prime}\cap \operatorname{Im}\gamma_{R,\left(
\eta,\xi \right)  }%
\]
has finitely many components. For each $n\in \mathbb{N},$ consider the geodesic
line $c_{n}^{\prime}$ having the same image as $c_{n}^{\prime \prime}$ and its
period is a multiple of the period of $c_{n}^{\prime \prime}$ so that
\[
\operatorname{Im}c_{n}^{\prime}\cap \operatorname{Im}\gamma_{R,\left(  \eta
,\xi \right)  }%
\]
is contained in a single period of $c_{n}^{\prime}.$ We may alter
$c_{n}^{\prime}$ in its (enlarged) period so that it does not intersect the
interior of the convex subset $\widetilde{S}\left(  \eta,\xi \right)  $ of
$\widetilde{S}$ bounded by $\operatorname{Im}\gamma_{L,\left(  \eta
,\xi \right)  }$ and $\operatorname{Im}\gamma_{R,\left(  \eta,\xi \right)  }.$
For such an alteration we only need to modify $c_{n}^{\prime}$ in
subintervals, say $\left[  z,w\right]  ,$ of its image contained in
$\widetilde{S}\left(  \eta,\xi \right)  .$ Namely, we have to replace
$c_{n}^{\prime}|_{\left[  z,w\right]  }$ by $\gamma_{R,\left(  \eta
,\xi \right)  }|_{\left[  z,w\right]  }.$ Then, by repeating this alteration we
obtain a geodesic line, denoted by $c_{n},$ whose projection to $S\ is$ a
closed geodesic. Clearly by construction

$c_{n}\left(  -\infty \right)  =c_{n}^{\prime \prime}\left(  -\infty \right)
=\phi_{n}\left(  -\infty \right)  $, $c_{n}\left(  \infty \right)
=c_{n}^{\prime \prime}\left(  +\infty \right)  =\phi_{n}\left(  +\infty \right)
$ and\newline as $\operatorname{Im}c_{n}\cap \widetilde{S}\left(  \eta
,\xi \right)  \subset \operatorname{Im}\gamma_{R,\left(  \eta,\xi \right)  },$ it
follows that $c_{n}\rightarrow \gamma_{L,\left(  \eta,\xi \right)  }$ uniformly
on compact sets.
\end{proof}


\begin{thebibliography}{9}                                                                                                %


\bibitem {[CPT]}Ch. Charitos, I. Papadoperakis, G. Tsapogas, \textit{The
geometry of Euclidean surfaces with conical singularities, }http://arxiv.org/abs/1306.1759

\bibitem {[CDP]}M. Coornaert, T. Delzant, A. Papadopoulos,
\textit{G\'{e}ometrie et th\'{e}orie des groupes,} Lecture Notes in
Mathematics, 1441, Sringer-Verlag, 1980.

\bibitem {[Coo]}M. Coornaert, \textit{Sur les gropes proprement discontinus
d'isometries des espaces hyperboliques au sens de Gromov}, Th\`{e}se de
U.L.P., Publication de I.R.M.A., 1990.

\bibitem {Gr}M. Gromov, \textit{Hyperbolic groups} in \textquotedblleft Essays
in group theory\textquotedblleft, M.S.R.I. Publ. 8 (Springer 1987), 75-263.

\bibitem {[GLP]}M. Gromov, J. Lafontaine, P. Pansu, \textit{Structures
metriques pour les vari\'{e}t\'{e}s Riemanniennes}, Fernand Nathan, Paris, 1981.
\end{thebibliography}
\end{document}